\title
[Basic results on braid  groups]
{Basic results on braid groups}
\author
[J. \lastname{Gonz\'alez-Meneses}]
{\firstname{Juan} \lastname{Gonz\'alez-Meneses}}
\address{Departamento de \'Algebra\\
Facultad de Matem\'aticas\\
Universidad de Sevilla\\
Apdo. 1160\\
41080 - Sevilla\\
SPAIN}
\thanks{The author is partially supported by MTM2007-66929 and FEDER}
\email{meneses@us.es}
\keywords{Braids, lecture notes, torsion-free, presentation, Garside, Nielsen-Thurston theory}
\subjclass{20F36}
\begin{document}


\begin{abstract}
These are Lecture Notes of a course given by the author at the French-Spanish School {\it Tresses in Pau}, held in Pau (France) in October 2009. It is basically an introduction to distinct approaches and techniques that can be used to show results in braid groups. Using these techniques we provide several proofs of well known results in braid groups, namely the correctness of Artin's presentation, that the braid group is torsion free, or that its center is generated by the full twist. We also recall some solutions of the word and conjugacy problems, and that roots of a braid are always conjugate. We also describe the centralizer of a given braid. Most proofs are classical ones, using modern terminology. I have chosen those which I find simpler or more beautiful.

\end{abstract}

\begin{altabstract}
Cet article contient les notes d'un course donn\'e par l'auteur \`a l'Ecole Franco-Espagnole {\it Tresses in Pau}, qui a eu lieu \`a Pau (France) en Octobre 2009. Il s'agit essentiellement d'une introduction aux diff\'erents points des vue et techniques qui peuvent \^etre utilisées pour montrer des r\'esultats dans les groupes de tresses. En utilisant ces techniques on montre quelques r\'esultats bien connus dans les groupes de tresses, \`a savoir l'exactitude de la presentation d'Artin, le fait que les groupes de tresses sont sans torsion, ou que son centre est engendr\'e par le {\it full twist}. On rappelle quelques solutions des probl\`emes du mot et de la conjugaison, et aussi que les racines d'une tresse sont toutes conjugu\'ees. On d\'ecrit aussi le centralisateur d'une tresse donn\'ee. La plupart des preuves sont classiques, en utilisant de la terminologie moderne. J'ai choisi celles qui je trouve plus simples ou plus jolies.
\end{altabstract}

\maketitle


\tableofcontents

\section{Definitions of braid groups}

The term {\it braid group} was used for the first time by Emil Artin in 1925~\cite{Artin}, although probably these groups were considered for the first time by Hurwitz in 1891~\cite{Hurwitz} as what in modern terminology would be called fundamental groups of configuration spaces of $n$ points in the complex plane (or in a Riemann surface). Magnus in 1934~\cite{Magnus_1934} considered the same group from the point of view of mapping classes. Markoff~\cite{Markoff}  gave a completely algebraic approach. All these points of view were already known to define the same group by that time (see Zariski~\cite{Zariski}).

We shall give several definitions for the braid groups, following the distinct approaches indicated above. The first one, in the spirit of Hurwitz, uses configuration spaces to define a braid as a {\it motion} of points in the plane. The three dimensional representation of this motion gives the usual way in which braids are displayed, and gives the second definition more related to Artin's point of view. Then we will see braid as mapping classes, that is, as homeomorphisms of the punctured disc up to isotopy. Finally, we shall also see Artin's representation of braids as automorphisms of the free group. These distinct points of view provide a considerable amount of approaches and tools to solve problems in braid groups.

\subsection{Pure braids as loops in a configuration space}

Let us start with the first definition. Consider the {\it configuration space} of $n$ ordered distinct points in the complex plane $\mathbb C$. That is,
$$
     M_n =\{(z_1,\ldots,z_n)\in \mathbb C^n;\ z_i\neq z_j,\ \forall i\neq j\}.
$$
Notice that this is a connected space of real dimension $2n$. Actually it is the complement of a family of hyperplanes in $\mathbb C^n$: Namely, if we define the hyperplane $H_{ij}=\{z_i=z_j\}\in \mathbb C^n$ for $1\leq i<j\leq n$, then $M_n=\mathbb C^n \backslash \mathcal D$, where
$$
      \mathcal D = \bigcup_{1\leq i<j\leq n}H_{ij}.
$$
This union of hyperplanes $\mathcal D$ is usually called the {\it braid arrangement}~\cite{Orlik-Terao}, or the {\it big diagonal}.

It is not a good idea to try to visualize the configuration space $M_n$ in $2n$ dimensions. Instead, one just needs to consider $n$ distinct points in $\mathbb C$, which are given in order. That is, $(z_1,z_2,z_3,\ldots,z_n)$ and $(z_2,z_1,z_3\ldots,z_n)$ represent the same set of points in $\mathbb C$, but they are two distinct points in $M_n$.

\begin{definition}
The pure braid group on $n$ strands, $PB_n$, is the fundamental group of $M_n$.
$$
     PB_n=\pi_1(M_n).
$$
\end{definition}

Now try to visualize this definition. A pure braid $\beta\in \pi_1(M_n)$ is a loop in $M_n$
$$
\begin{array}{rccl}   \beta: & [0,1] & \longrightarrow & M_n \\
                            &  t  & \longmapsto & \beta(t) = (\beta_1(t),\ldots,\beta_n(t)),
\end{array}
$$
which starts and ends at the same base point. We then just need to fix a base point of $M_n$, say the $n$-tuple of integers $(1,2,\ldots,n)$, and a pure braid will be represented by a {\it motion} of these points in $\mathbb C$ provided that, at any given moment of the motion, the points are all pairwise distinct. At the end of the motion, each point goes back to its original position. Of course, the loop is defined up to homotopy, so we are allowed to deform the motions in a natural way (provided that two points are never at the same place at the same moment) and we will have equivalent pure braids.

\subsection{Pure braids as collections of strands}

The usual way to visualize a pure braid is by representing the motion of the $n$ points in a three dimensional picture. For every $t\in [0,1]$, we represent the $n$-tuple $\beta(t)=(\beta_1(t),\ldots,\beta_n(t))$ just by drawing, in $\mathbb C\times[0,1]$, the $n$ points $(\beta_1(t),t),\ldots,(\beta_n(t),t)$. In this way a pure braid is represented as in the left hand side of Figure~\ref{Figure_1}. It is a convention for many authors (but not all) that $\mathbb C\times\{0\}$ is represented above $\mathbb C\times\{1\}$. The motion of the point that starts at position $k$ (for $k=1,\ldots,n$) can then be seen as a {\it strand}, which corresponds to the $k$-th projection $\beta_k(t)$ of the $n$-tuple $\beta(t)$, for $t\in[0,1]$. This is called the $k$-th strand of the pure braid: It starts at $(k,0)$ and ends at $(k,1)$.  In this way it is much easier to understand when two motions (or two collections of strands) are homotopic, that is, represent the same pure braid. Indeed, an allowed homotopy is just a continuous deformation within the space of pure braids. In other words, the homotopy of a loop in $M_n$, fixing the endpoints, corresponds to a continuous deformation of the $n$ strands in $\mathbb C\times [0,1]$, provided the endpoints ($(1,0),\ldots,(n,0)$ and $(1,1),\ldots,(n,1)$) are fixed, the strands are pairwise disjoint, and each strand intersects each horizontal plane $\mathbb C\times \{t\}$ exactly at one point, at any given moment of the deformation.

\begin{figure}[ht]
\centerline{\includegraphics{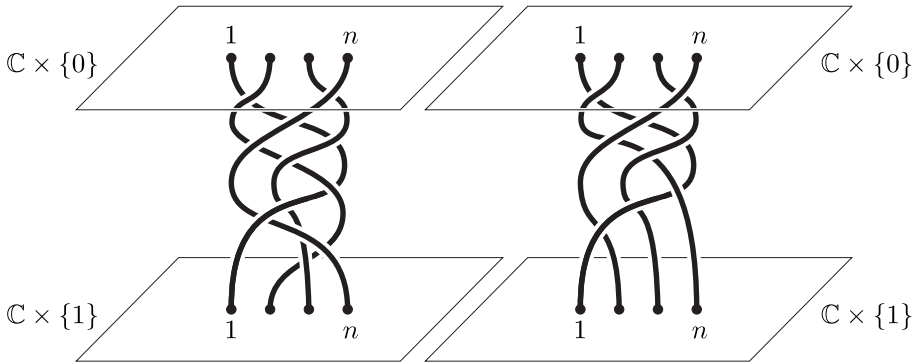}} \caption{A pure braid and a non-pure braid.}
\label{Figure_1}
\end{figure}

Notice that pure braid groups are groups, that is, there is a well defined multiplication of pure braids. This multiplication is given by concatenating loops in $M_n$, which corresponds to performing one motion after the other. In the three dimensional picture of Figure~\ref{Figure_1}, multiplication is just given by stacking braids (and rescaling the vertical direction so that we get $\mathbb C\times [0,1]$ again).

\subsection{Braids, in general}

The definition of general (not necessarily pure) braids can be obtained from the case of pure braids in a very natural way: Braids appear when we do not care about the order in which the $n$ points are considered, so we just care about sets of $n$ distinct points in $\mathbb C$. This configuration space is conceptually more natural than $M_n$, although we use $M_n$ to define it properly: Notice that the symmetric group $\Sigma_n$ acts on $M_n=\mathbb C^n\backslash \mathcal D$ by permuting coordinates. The quotient of $M_n$ by this action is the {\it configuration space of $n$ unordered points in $\mathbb C$}, denoted $N_n=M_n/\Sigma_n$.

\begin{definition}
The braid group on $n$ strands, $B_n$, is the fundamental group of $N_n$.
$$
     B_n=\pi_1(N_n).
$$
\end{definition}

As above, a braid can be represented in $\mathbb C\times [0,1]$ as a collection of $n$ strands, as in the right hand side of Figure~\ref{Figure_1}. The difference with respect to the case of pure braids is that the $k$-th strand (that is, the strand starting at $(k,0)$) does not necessarily end at $(k,1)$, but at $(k',1)$ for some $k'\in \{1,\ldots,n\}$. Again, two braids are considered to be equal if they are homotopic, that is, if one can transform one into the other by a deformation of its strands, with the endpoints fixed, provided the strands are pairwise disjoint and each strand intersects each horizontal plane at a single point, at any given moment during the deformation.

As above, notice that braid groups are groups. The multiplication is given by concatenating loops in $N_n$, which corresponds to performing one motion after the other or, in the three dimensional picture of Figure~\ref{Figure_1}, it corresponds to stacking braids (and rescaling the vertical direction).

\subsection{Braids as mapping classes}

Another well known interpretation of braids consists of considering them as automorphisms of the punctured disc, up to isotopy. More precisely, let $\mathbb D_n$ be the closed disc minus $n$ punctures:
$$
   \mathbb D_n = \mathbb D^2\backslash \{P_1,\ldots,P_n\}.
$$
Let $Homeo^+(\mathbb D_n)$ be the set of orientation preserving homeomorphisms from $\mathbb D_n$ to itself, fixing the boundary pointwise. This set admits the usual compact-open topology. We then have a natural notion of {\it continuous deformation} of an automorphism of $\mathbb D_n$, fixing the boundary and the punctures. We will consider that two automorphisms are equal if one can be transformed into the other by such a continuous deformation. In other words, if we denote by $Homeo_0^+(\mathbb D_n)$ the connected component of $\mbox{id}_{\mathbb D_n}$ in $Homeo^+(\mathbb D_n)$, then we define the {\it mapping class group} of $\mathbb D_n$ as:
$$
   \mathcal M(\mathbb D_n)=Homeo^+(\mathbb D_n)/Homeo_0^+(\mathbb D_n).
$$
With this definition, one has:
$$
     \mathcal  M(\mathbb D_n)\simeq B_n.
$$
Actually, one can consider $\mathbb D_n$ as a closed disc in $\mathbb C$ enclosing the points $\{1,\ldots,n\}$. It is well known that an automorphism of the closed disc $\mathbb D^2$ that fixes the boundary is isotopic to the identity (this is known as Alexander's trick~\cite{Alexander}). Hence, given an element of $\mathcal M(\mathbb D_n)$, one can take a homeomorphism of $\mathbb D_n$ representing it and extend it in a unique way to a homeomorphism $f$ of $\mathbb D^2$. Take an isotopy from $\mbox{id}_{\mathbb D^2}$ to $f$, and trace the motion of the points $\{1,\ldots,n\}$ during the isotopy. This gives a loop in $N_n$ which corresponds to a braid. It is not difficult to show that this gives a well defined map from $\mathcal M(\mathbb D_n)$ to $B_n$, which is a group isomorphism. Hence braids can be seen as mapping classes of the punctured disc.

It is worth mentioning why one should take a closed punctured disc $\mathbb D_n$ instead of the punctured plane $\mathbb C_n=\mathbb C\backslash \{1,\ldots,n\}$, in the above definition. The answer is easy: one does not obtain the same group. Indeed, consider the mapping class group $\mathcal M(\mathbb C_n)$ of isotopy classes of orientation-preserving homeomorphisms of $\mathbb C_n$ (we can consider $\mathbb D_n$ embedded into $\mathbb C_n$, with the same punctures). The first thing to notice is that $\mathbb C_n$ is homeomorphic to the $(n+1)$-times punctured sphere $\mathbb S_{n+1}$, hence a homeomorphism of $\mathbb C_n$ does not necessarily fix the set of punctures $\{1,\ldots,n\}$ (as the ``point at infinity" is considered as any other puncture).

One could then consider $\mathcal M(\mathbb C_n; \{\infty\})$ to be the subgroup of $\mathcal M(\mathbb C_n)$ consisting of automorphisms that fix the point at infinity. But even in this case the resulting group is not isomorphic to $B_n$. Indeed, consider a simple closed curve $C\subset \mathbb D_n$ enclosing the $n$ punctures. A Dehn-twist along this curve (which corresponds to cutting $\mathbb D_n$ along this curve, rotating the boundary of one of the pieces by $360^\circ$ and gluing again)  represents the trivial element in $\mathcal M(\mathbb C_n; \{\infty\})$, but a non-trivial element in $\mathcal M(\mathbb D_n)=B_n$, which is usually denoted $\Delta^2$. Actually, we will see later that the center of $B_n$ is equal to $Z(B_n)=\langle \Delta^2\rangle$, and it is well known that $\mathcal M(\mathbb C_n;\{\infty\})= B_n/ Z(B_n)$. Also, the group $\mathcal M(\mathbb C_n; \{\infty\})$ has torsion, while $B_n$ does not, as we shall see.

\subsection{Standard generators of the braid group}

\begin{figure}[ht]
\centerline{\includegraphics{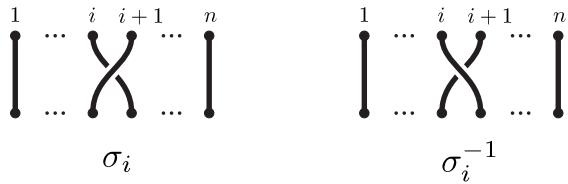}} \caption{Standard (or Artin) generators of $B_n$.}
\label{Figure_2}
\end{figure}

Before giving a last interpretation of braids, we shall describe the standard set of generators, introduced by Artin~\cite{Artin}. If we consider a braid as a collection of $n$ strands in $\mathbb C\times[0,1]$, we usually draw the projection of its strands onto the plane $\mathbb R\times[0,1]$. It is clear that we can deform the braid so that its projection has only a finite number of crossing points, each one involving only two strands. We usually draw the crossings as in Figure~\ref{Figure_1} to visualize which strand crosses over and which one crosses under. Moreover, by a further deformation, we can assume that the crossings occur at distinct heights, that is, for distinct values of $t\in [0,1]$. In this way, it is clear that every braid is a product of braids in which only two consecutive strands cross. That is, if one considers for $i=1,\ldots,n-1$ the braids $\sigma_i$ and $\sigma_i^{-1}$ as in Figure~\ref{Figure_2}, it is clear that they are the inverse of each other, and that $\{\sigma_1,\ldots,\sigma_{n-1}\}$ is a set of generators of $B_n$, called the {\it standard generators}, or the {\it Artin generators} of the braid group $B_n$.

\subsection{Braids as automorphisms of the free group}

We shall now give still another interpretation of braids. This is one of the main results in Artin's paper~\cite{Artin}. There is a natural representation of braids on $n$ strands as automorphisms of the free group $F_n$ of rank $n$. Although Artin visualized braids as collections of strands, we believe that it is more natural to define their representation into $Aut(F_n)$ by means of mapping classes, as was done by Magnus~\cite{Magnus_1934}.

\begin{figure}[ht]
\centerline{\includegraphics{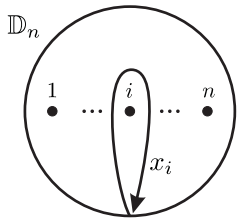}} \caption{The loops $x_1,\ldots,x_n$ are free generators of $\pi_1(\mathbb D_n)$.}
\label{Figure_3}
\end{figure}

We remark that the fundamental group of the $n$-times punctured disc $\mathbb D_n$ is precisely the free group of rank $n$: $\pi_1(\mathbb D_n)=F_n$. If we fix a base point, say in the boundary of $\mathbb D_n$, one can take as free generators the loops $x_1,\ldots,x_n$ depicted in Figure~\ref{Figure_3}. Now a braid $\beta\in B_n$ can be seen as an automorphism of $\mathbb D_n$ up to isotopy, so $\beta$ induces a well defined action on $\pi_1(\mathbb D_n)=F_n$, where a loop $\gamma\in \pi_1(\mathbb D_n)$ is sent to $\beta(\gamma)$. This action is clearly a group homomorphism (respects concatenation of loops), which is bijective as $\beta^{-1}$ yields the inverse action. Hence $\beta$ induces an automorphism of $F_n$, and this gives a representation:
$$
\begin{array}{rccl}    \rho :&  B_n & \longrightarrow & Aut(F_n) \\
                              &  \beta & \longmapsto & \rho_\beta.
\end{array}
$$
The automorphism $\rho_\beta$ can be easily described when $\beta=\sigma_i$, by giving the image of the generators $x_1,\ldots,x_n$ of $F_n$ (see Figure~\ref{Figure_4}). Namely:
$$
  \rho_{\sigma_i}(x_i)= x_{i+1}, \quad \rho_{\sigma_i}(x_{i+1})=x_{i+1}^{-1} x_i x_{i+1}, \quad \rho_{\sigma_i}(x_j)=x_j \mbox{ (if $j\neq i,i+1$).}
$$
The automorphism $\rho_{\sigma_i^{-1}}$ can be easily deduced from $\rho_{\sigma_i}$.  For a general braid $\beta$, written as a product of $\sigma_1,\ldots,\sigma_{n-1}$ and their inverses, the automorphism $\rho_\beta$ is just the composition of the corresponding automorphisms corresponding to each letter.

\begin{figure}[ht]
\centerline{\includegraphics{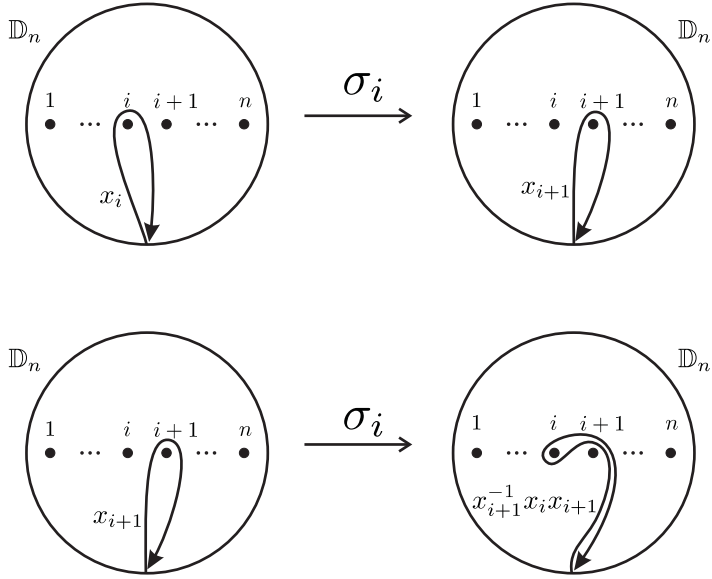}} \caption{Action of $\sigma_i$ on the generators $x_i$ and $x_{i+1}$.}
\label{Figure_4}
\end{figure}

Later we will see that the braid group $B_n$ admits the presentation
\begin{equation}
  B_n= \left\langle \sigma_1,\ldots,\sigma_{n-1} \left| \begin{array}{cl}
    \sigma_i\sigma_j=\sigma_j\sigma_i, & |i-j|>1 \\
    \sigma_i\sigma_j\sigma_i=\sigma_j\sigma_i\sigma_j, & |i-j|=1
  \end{array}\right. \right\rangle.
\end{equation}
It is then very easy to check that $\rho$ is well defined, as $\rho_{\sigma_i\sigma_j}\equiv \rho_{\sigma_j\sigma_i}$ if $|i-j|>1$, and $\rho_{\sigma_i\sigma_j\sigma_i}\equiv \rho_{\sigma_j\sigma_i\sigma_j}$ if $|i-j|=1$. Artin~\cite{Artin2} showed that $\rho$ is faithful by topological arguments, making no use of the above presentation.

Notice that for every $\beta\in B_n$, the automorphism $\rho_\beta$ sends each generator $x_j$ to a conjugate of a generator. Notice also that for each $i=1,\ldots,n-1$, one has $\rho_{\sigma_i}(x_1\cdots x_n)=x_1\cdots x_n$. Hence $\rho_\beta(x_1\cdots x_n)=x_1\cdots x_n$ for every $\beta\in B_n$. This is clear as $x_1\cdots x_n$ corresponds to a loop that runs parallel to the boundary of $\mathbb D_n$, enclosing the $n$ punctures, hence it is not deformed by any braid (up to isotopy). Actually, Artin~\cite{Artin} proved that these two conditions are not only necessary, but also sufficient for an element of $Aut(F_n)$ to be induced by a braid.

\begin{theorem}\cite{Artin} An automorphism $f\in Aut(F_n)$ is equal to $\rho_\beta$ for some $\beta\in B_n$ if and only if the following two conditions are satisfied:
\begin{enumerate}

\item $f(x_i)$ is a conjugate of some $x_j$, for $i=1,\ldots,n$.

\item $f(x_1\cdots x_n)=x_1\cdots x_n$.

\end{enumerate}
\end{theorem}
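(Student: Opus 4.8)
The plan is to prove the ``if'' direction (the ``only if'' direction having been verified already in the discussion preceding the statement). So suppose $f\in Aut(F_n)$ satisfies conditions (1) and (2); we must produce a braid $\beta$ with $\rho_\beta=f$. The strategy is to proceed by induction on $n$, peeling off one strand at a time, using the combinatorial structure forced by the two conditions together with the faithfulness of $\rho$ (which we may assume).

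The key observation is that condition (1) says $f(x_i)=w_i\, x_{\tau(i)}\, w_i^{-1}$ for some words $w_i\in F_n$ and some map $\tau:\{1,\dots,n\}\to\{1,\dots,n\}$. First I would show $\tau$ is a permutation: since $f$ is an automorphism, the images $f(x_i)$ generate $F_n$, and abelianizing shows the matrix of $f$ on $\BbZ^n=F_n^{ab}$ is the permutation matrix of $\tau$, which must therefore be invertible. Next, and this is where condition (2) enters decisively, I would show that after composing $f$ with a suitable braid automorphism $\rho_\gamma$ (one realizing any prescribed permutation, e.g.\ a product of the $\sigma_i$) we may assume $\tau=\mathrm{id}$, i.e.\ $f(x_i)=w_i x_i w_i^{-1}$ for each $i$. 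Then condition (2) becomes $w_1 x_1 w_1^{-1}\cdots w_n x_n w_n^{-1}=x_1\cdots x_n$ in $F_n$. The heart of the argument is a normalization lemma: using the freedom to replace $f$ by $\rho_\sigma\circ f$ (a ``peak reduction''-type move) one reduces the total length $\sum_i |w_i|$; when it cannot be reduced further, the relation above forces $w_i=1$ for all $i$, hence $f=\mathrm{id}$, and unwinding the reductions exhibits $f$ as $\rho_\beta$ for the product $\beta$ of the $\sigma_i^{\pm1}$ used. Equivalently, and perhaps more cleanly in the spirit of the mapping-class interpretation from Section~1.4: condition (1) says $f$ sends each meridian $x_i$ to (a conjugate of) a meridian, and condition (2) says $f$ fixes the boundary curve $x_1\cdots x_n$; a loop $x_i$ together with an arc of the boundary bounds a disc containing only the $i$-th puncture, and $f$ must carry this picture to another such picture, so $f$ permutes the punctures and is realized by a homeomorphism of $\mathbb D_n$ fixing $\partial\mathbb D_n$ — that is, by a braid.

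Concretely I would carry out the algebraic version in these steps: (i) extract $\tau$ and show it is a permutation via abelianization; (ii) exhibit, for each transposition $s_i=(i\ i{+}1)$, the braid $\sigma_i$ with $\rho_{\sigma_i}$ inducing $s_i$ on the $x$'s up to the explicit conjugation given in the excerpt, and thereby reduce to the case $\tau=\mathrm{id}$; (iii) with $\tau=\mathrm{id}$, set up the fundamental relation $w_1x_1w_1^{-1}\cdots w_nx_nw_n^{-1}=x_1\cdots x_n$ and analyze the possible cancellations between consecutive factors in a reduced-word computation of the left-hand side; (iv) show that nontrivial cancellation pins down the first or last syllable of some $w_i$ to be $x_j^{\pm 1}$ for an adjacent index $j$, which is exactly the syllable one removes by pre- or post-composing with $\sigma_j^{\pm1}$; (v) induct on $\sum|w_i|$ to conclude $f=\rho_\beta$.

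The main obstacle is step (iv): controlling the cancellation pattern in the word $w_1x_1w_1^{-1}\cdots w_nx_nw_n^{-1}$ and proving that when it equals $x_1\cdots x_n$ one can always strip off a syllable \emph{of braid type} (rather than some cancellation that does not correspond to any $\sigma_j^{\pm1}$). This is essentially a small cancellation / Whitehead-algorithm style argument, and getting the bookkeeping right — especially ensuring the removed syllable involves an index adjacent to $i$ so that an honest standard generator does the job, and handling the cyclic nature of the relation at the two ends — is the delicate part. If one prefers to sidestep it, the topological route via mapping classes of $\mathbb D_n$ (sketched above) shortcuts the combinatorics at the cost of invoking surface-topology facts about curves and meridians; I would present the topological argument as the clean proof and remark that it can be made purely group-theoretic along the lines of (i)--(v).
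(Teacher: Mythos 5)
The paper does not in fact prove this theorem: it checks the ``only if'' direction in the surrounding discussion and then refers the reader to Artin's papers for the converse, describing it as ``a simple algebraic argument.'' Your plan is, in outline, exactly that argument of Artin --- write $f(x_i)=w_i x_{\tau(i)}w_i^{-1}$, deduce that $\tau$ is a permutation by abelianizing, and run a length-reduction induction on $\sum_i \ell(f(x_i))$ using pre- and post-composition by the $\rho_{\sigma_j}^{\pm 1}$ until every image is a single generator, at which point condition (2) pins everything down. So the approach is the right one and matches the cited source rather than anything reproduced in the paper.

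As a proof, however, it has a genuine gap, and you have located it yourself: step (iv), the claim that whenever $\sum_i \ell\bigl(w_i x_i w_i^{-1}\bigr)>n$ the cancellation pattern in $w_1x_1w_1^{-1}\cdots w_nx_nw_n^{-1}=x_1\cdots x_n$ always allows one to strip off a syllable by composing with some $\rho_{\sigma_j}^{\pm 1}$, is precisely the content of Artin's key lemma, and it is nowhere established in your write-up; everything else in (i)--(v) is routine. Until that cancellation analysis is actually carried out --- including the possibility that the cancellation occurs inside a single factor or across non-adjacent indices, and the verification that the removed syllable is always ``of braid type'' --- you have a strategy, not a proof. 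Two smaller points. First, your ordering of normalizations is inconsistent: you reduce to $\tau=\mathrm{id}$ in step (ii), but the reduction moves of step (iv) are compositions with $\rho_{\sigma_j}^{\pm1}$, which reintroduce a nontrivial permutation; the cleaner order (and Artin's) is to reduce length first, arriving at $f(x_i)=x_{\tau(i)}$ (the exponent is $+1$ because $f(x_i)$ is conjugate to a generator, not to its inverse), and only then use condition (2), i.e.\ $x_{\tau(1)}\cdots x_{\tau(n)}=x_1\cdots x_n$ in $F_n$, to force $\tau=\mathrm{id}$. Second, the topological ``shortcut'' is not actually shorter: the assertion that an automorphism of $\pi_1(\mathbb D_n)$ sending meridians to conjugates of meridians and fixing the boundary word is induced by a homeomorphism fixing $\partial\mathbb D_n$ pointwise is a realization statement of Dehn--Nielsen--Baer type, whose proof is at least as involved as the combinatorial argument, so it cannot be presented as the ``clean proof'' without substantial justification.
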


We recommend the reader to see the beautiful proof of this result in~\cite{Artin} (in German) or in~\cite{Artin2} (in English), based on a simple algebraic argument.

\subsubsection{First solution to the word problem} One important remark, concerning the fact that braids can be seen as automorphisms of the free group $F_n$, is that this immediately yields a solution to the {\it word problem} in $B_n$, as was already noticed by Artin~\cite{Artin}. Given two braids, $\beta_1$ and $\beta_2$, given as words in $\sigma_1,\ldots,\sigma_{n-1}$ and their inverses, one can compute their corresponding automorphisms of $F_n$, $\rho_{\beta_1}$ and $\rho_{\beta_2}$. Then $\beta_1=\beta_2$ if and only if $\rho_{\beta_1}\equiv \rho_{\beta_2}$, and this happens if and only if $\rho_{\beta_1}(x_i)=\rho_{\beta_2}(x_i)\in F_n$ for $i=1,\ldots,n$. Since the word problem in $F_n$ is well known (just need to compute the reduced words associated to $\rho_{\beta_1}(x_i)$ and $\rho_{\beta_2}(x_i)$), this solves the word problem in $B_n$.  We just remark that this algorithm is by no means efficient, and there are several other very efficient algorithms to solve the word problem. But this is historically the first known solution.

\subsubsection{Braids are residually finite and Hopfian}
Let us finish this section with a couple of consequences from the fact that $B_n$ is a subgroup of $Aut(F_n)$, which were noticed by Magnus~\cite{Magnus_1969}. Recall that a group $G$ is {\it residually finite} if the intersection of its finite index subgroups is trivial. Equivalently, $G$ is residually finite if for every nontrivial $a\in G$ there exists a finite group $H$ and a surjection $\varphi:\: G\rightarrow H$ such that $\varphi(a)\neq 1$.

Recall also that a group $G$ is said to be Hopfian if it is not isomorphic to any proper quotient; equivalently, if every surjection from $G$ to itself is an isomorphism.

\begin{theorem}
The braid group $B_n$ is residually finite and Hopfian.
\end{theorem}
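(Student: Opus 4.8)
The plan is to deduce both properties from the already established fact that $\rho$ embeds $B_n$ as a subgroup of $Aut(F_n)$, combined with two classical facts about residually finite groups: a theorem of Baumslag on automorphism groups, and a theorem of Malcev relating residual finiteness to the Hopfian property.

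First I would recall that the free group $F_n$ is residually finite; this is classical (every nontrivial reduced word survives in a suitable finite quotient, or one may use that $F_n$ embeds in $GL_2(\mathbb Z)$). Since $F_n$ is moreover finitely generated, I can invoke Baumslag's theorem: the automorphism group of a finitely generated residually finite group is itself residually finite. I would include its short proof, as it is the technical heart of the matter. Given $1\neq \phi\in Aut(F_n)$, pick $w\in F_n$ with $\phi(w)w^{-1}\neq 1$ and, by residual finiteness of $F_n$, a finite index subgroup $N\leq F_n$ with $\phi(w)w^{-1}\notin N$. Because $F_n$ is finitely generated, there are only finitely many subgroups of index at most $[F_n:N]$; let $N^{\ast}$ be their intersection. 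Then $N^{\ast}$ is a characteristic subgroup of $F_n$ of finite index contained in $N$, so every automorphism of $F_n$ descends to the finite group $F_n/N^{\ast}$, giving a homomorphism $Aut(F_n)\rightarrow Aut(F_n/N^{\ast})$ with finite target; and $\phi$ has nontrivial image since $\phi(w)w^{-1}\notin N^{\ast}$. Hence $Aut(F_n)$ is residually finite.

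Next, residual finiteness passes to subgroups: if $1\neq\beta\in B_n$, then, viewing $\beta$ inside $Aut(F_n)$, there is a surjection from $Aut(F_n)$ onto a finite group not killing $\beta$, and its restriction to $B_n$ (followed by passing to the image, which is again finite) does not kill $\beta$ either. (Alternatively one can argue explicitly through $\rho$: distinguish $\rho_\beta$ from the identity by its action on some $x_i$ and kill that difference in a finite quotient of $F_n$; but the abstract argument suffices.) Therefore $B_n$ is residually finite. Finally, for the Hopfian property I would use Malcev's theorem: a finitely generated residually finite group is Hopfian. Since $B_n$ is generated by $\sigma_1,\dots,\sigma_{n-1}$ it is finitely generated, and it is residually finite by the above, so it is Hopfian. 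If desired I would recall Malcev's argument: a surjection $\psi\colon B_n\twoheadrightarrow B_n$ induces, for each $m$, an injection hence a bijection of the finite set of subgroups of index at most $m$ via $H\mapsto\psi^{-1}(H)$, so $\psi^{-1}$ fixes their intersection $G_m$, whence $\ker\psi\subseteq\bigcap_m G_m=\{1\}$.

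The main obstacle, and really the only input beyond citation, is Baumslag's theorem; within it, the crucial point is the use of finite generation of $F_n$ to guarantee finitely many subgroups of each bounded index, which is exactly what forces the characteristic subgroup $N^{\ast}$ to have finite index. Everything else is routine: residual finiteness is a subgroup-closed property, and $B_n$ is visibly finitely generated.
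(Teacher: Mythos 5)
Your proof is correct and follows essentially the same route as the paper: embed $B_n$ in $Aut(F_n)$, apply Baumslag's theorem that the automorphism group of a finitely generated residually finite group is residually finite, pass to subgroups, and conclude the Hopfian property from Mal'cev's theorem. The only divergence is cosmetic: you justify the residual finiteness of $F_n$ directly (e.g.\ via $F_n\hookrightarrow GL_2(\mathbb Z)$), whereas the paper reaches it by applying Baumslag's theorem once more to $\mathbb Z^2$ and locating $F_2$ inside $Aut(\mathbb Z^2)$; and you spell out the proofs of Baumslag's and Mal'cev's lemmas, which the paper only cites.
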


\begin{proof}
All the ideas for this result are explained in Section 6.5 of~\cite{Magnus-Karrass-Solitar}, where references to~\cite{Baumslag,Levi,Malcev,Hall} are given.

First notice that if a group is residually finite, so is every subgroup. A very simple argument by Baumslag~\cite{Baumslag}, explained in~\cite{Magnus-Karrass-Solitar}, shows that if a group $G$ is finitely generated and residually finite, then $Aut(G)$ is residually finite. Using these two results, the proof goes as follows:

The group $\mathbb Z^2$ is clearly finitely generated and residually finite, hence $Aut(\mathbb Z^2)$ is residually finite. This group has $F_2$ (the free group of rank two) as a subgroup, for instance $F_2\simeq \left\langle \left(\begin{matrix} 1 & 2 \\ 0 & 1\end{matrix}\right),
\left(\begin{matrix} 1 & 0 \\ 2 & 1 \end{matrix}\right)\right\rangle \subset Aut(\mathbb Z^2)$. Therefore $F_2$ is residually finite. Hence all subgroups of $F_2$ are residually finite, in particular $F_n$ is residually finite for all $n\geq 1$. By Baumslag's argument again, $Aut(F_n)$ is residually finite, and so is $B_n\subset Aut(F_n)$.

On the other hand, Mal'cev~\cite{Malcev} proved that a finitely generated and residually finite group is Hopfian. A short proof can also be found in Section 6.5 of~\cite{Magnus-Karrass-Solitar}.
\end{proof}

\section{Braid groups are torsion free}

One of the best known results of braid groups is that they are torsion free. We shall give several distinct proofs of this fact, and this provides a perfect example of how one can show results in braid groups using completely distinct approaches, each one having interesting consequences, helping to better understand these groups.

\subsection{First proof: Short exact sequences}

Specialists in braid groups use to say: ``Braid groups are torsion free because the configuration space is $K(\pi,1)$". We will try to explain that sentence in this subsection, and we will present the usual way to show that the configuration space $N_n$ is a $K(\pi,1)$ space, given by Fadell and Neuwirth~\cite{Fadell-Neuwirth}.

There are well known short exact sequences of groups, involving braid groups, which were already known by Artin~\cite{Artin}. The first one is quite simple: To each braid in $B_n$ one can associate the permutation it induces on its strands, that is, an element of the symmetric group $\Sigma_n$. This yields a well defined group homomorphism $\eta$ from $B_n$ to $\Sigma_n$. Notice that $\eta(\sigma_i)=(i\ \ i+1)$ for $i=1,\ldots,n-1$. The kernel of $\eta$ is precisely the subgroup of $B_n$ formed by the braids inducing the trivial permutation, that is, the pure braid group $PB_n$. Hence we have an exact sequence:
\begin{equation}\label{E:exact_sequence_PBS}
  1 \rightarrow PB_n \longrightarrow B_n \stackrel{\eta}{\longrightarrow} \Sigma_n \rightarrow 1.
\end{equation}

There is also a natural map that relates pure braid groups of distinct indices. Namely, given a pure braid $\beta\in PB_{n+1}$, one can remove its last strand and will obtain a pure braid $\rho(\beta)\in PB_n$. This yields a well defined homomorphism $\rho: PB_{n+1}\rightarrow PB_n$ which is clearly surjective. The kernel of this map consists of the pure braids in $PB_{n+1}$ whose first $n$ strands form the trivial braid. Up to isotopy, we can consider that the first $n$ strands  are vertical. If we look at this kind of elements as loops in the configuration space $M_{n+1}$, they correspond to a motion of the $n+1$st point, where the points $1,\ldots,n$ do not move. This is of course equivalent to a motion of a point in the $n$-times punctured plane $\mathbb C_n$. In other words, $\ker(\rho)\simeq \pi_1(\mathbb C_n)$. As $\pi_1(\mathbb C_n)$ is isomorphic to $F_n$, the free group of rank $n$, we have the exact sequence:
\begin{equation}\label{E:exact_sequence_FPP}
  1 \rightarrow F_n \stackrel{\iota}{\longrightarrow} PB_{n+1} \stackrel{\rho}{\longrightarrow} PB_n \rightarrow 1.
\end{equation}
In this exact sequence, if $F_n$ is freely generated by $x_1,\ldots,x_n$, we can define
$$
    \iota(x_i)=(\sigma_{n}^{-1} \cdots \sigma_{i+1}^{-1})\: \sigma_i^2 \:(\sigma_{i+1}\cdots \sigma_n),
$$
for $i=1,\ldots,n$.

It is not difficult to see that the exact sequence~(\ref{E:exact_sequence_FPP}) splits. Probably the most elegant way to see this comes from the following result by Fadell and Neuwirth, which implies the existence of the exact sequence, the fact that it is split, and that $M_n$ and $N_n$ are $K(\pi,1)$ spaces:

\begin{theorem}\cite{Fadell-Neuwirth} For $n\geq 1$, the map
$$
\begin{array}{rccc}   p: & M_{n+1} & \longrightarrow & M_n \\
                         & (z_1,\ldots,z_{n+1}) & \longmapsto  & (z_1,\ldots,z_n)
\end{array}
$$
is a locally trivial fiber bundle.
\end{theorem}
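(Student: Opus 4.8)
The plan is to exhibit, for each point $(z_1,\ldots,z_n)\in M_n$, an open neighbourhood $U$ together with a homeomorphism $p^{-1}(U)\simeq U\times F$ commuting with the projections, where the fibre $F$ is the $n$-times punctured plane. The fibre over a point $(z_1,\ldots,z_n)$ is clearly $\{w\in\mathbb C;\ w\neq z_i,\ \forall i\}=\mathbb C\setminus\{z_1,\ldots,z_n\}$, which is indeed homeomorphic to $\mathbb C_n$; the content of the statement is the \emph{local triviality}.

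The key step is to build a continuous family of self-homeomorphisms of $\mathbb C$ that moves the configuration $(z_1,\ldots,z_n)$ to a fixed reference configuration, depending continuously (indeed smoothly) on the base point in a small neighbourhood. Concretely, fix $(z_1^0,\ldots,z_n^0)\in M_n$; for each $i$ choose disjoint closed discs $D_i$ around $z_i^0$, shrink the neighbourhood $U\subseteq M_n$ of $(z_1^0,\ldots,z_n^0)$ so that for $(z_1,\ldots,z_n)\in U$ each $z_i$ stays in the interior of $D_i$ and the discs remain pairwise disjoint, and then use a bump function supported in $D_i$ to produce a homeomorphism $h_{(z_1,\ldots,z_n)}\colon\mathbb C\to\mathbb C$ which equals the translation $w\mapsto w+(z_i^0-z_i)$ near $z_i$, is the identity outside $\bigcup_i D_i$, and varies continuously with $(z_1,\ldots,z_n)$. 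Then the map
\begin{equation*}
  p^{-1}(U)\longrightarrow U\times\big(\mathbb C\setminus\{z_1^0,\ldots,z_n^0\}\big),\qquad
  (z_1,\ldots,z_n,w)\longmapsto\big((z_1,\ldots,z_n),\,h_{(z_1,\ldots,z_n)}(w)\big)
\end{equation*}
is a homeomorphism over $U$, with inverse $((z_1,\ldots,z_n),v)\mapsto(z_1,\ldots,z_n,h_{(z_1,\ldots,z_n)}^{-1}(v))$; one checks it sends $p^{-1}(U)$ into the stated product because $h_{(z_1,\ldots,z_n)}$ carries $\{z_1,\ldots,z_n\}$ precisely onto $\{z_1^0,\ldots,z_n^0\}$.

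The main obstacle is the careful construction of the family $h_{(z_1,\ldots,z_n)}$: one must ensure simultaneously that each $h$ is a global homeomorphism of $\mathbb C$ (so the interpolation between a translation on a small disc and the identity off a slightly larger disc must stay injective — this is where disjointness of the $D_i$ and a Lipschitz estimate on the bump function enter), that it maps the moving punctures exactly to the fixed ones, and that the assignment $(z_1,\ldots,z_n)\mapsto h_{(z_1,\ldots,z_n)}$ is continuous in the compact–open topology. Everything else — that $p$ is continuous, surjective, open, and that the local trivialisations are compatible — is routine. Once the bundle structure is in place, the long exact sequence of homotopy groups of the fibration together with an induction on $n$ (starting from $M_1=\mathbb C$, which is contractible) yields that $M_n$ is a $K(\pi,1)$, that the sequence~(\ref{E:exact_sequence_FPP}) is exact, and that it splits via the section coming from a continuous choice of basepoint in the fibre (e.g.\ a point far from all $z_i$).
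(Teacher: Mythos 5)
The paper does not prove this statement; it simply cites Fadell--Neuwirth, so there is no in-paper argument to compare with. Your proposal is correct and is essentially the original argument: local triviality comes from a continuously varying family of compactly supported homeomorphisms of $\mathbb C$ carrying a nearby configuration back to the reference one, with injectivity of each $h_{(z_1,\ldots,z_n)}$ secured by shrinking $U$ until the displacements $|z_i-z_i^0|$ are small compared with the reciprocal of the Lipschitz constant of the bump function. The points you single out as needing care (injectivity of the interpolated translation, joint continuity, and that the punctures are carried exactly onto the reference punctures) are exactly the right ones, and none of them hides a genuine obstruction.
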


One important fact of this fiber bundle is that it admits a cross-section $s:\: M_n \rightarrow M_{n+1}$. This is not explicitly given in~\cite{Fadell-Neuwirth} (it is only given for configuration spaces on punctured manifolds), but it is easy to see~\cite{Paris_2009} that one can take $s((z_1,\ldots,z_n))=(z_1,\ldots,z_n,|z_1|+\cdots+|z_n|+1)$.

Notice that each fiber $p^{-1}\left((z_1,\ldots,z_n)\right)$ is isomorphic to $\mathbb C\backslash\{z_1,\ldots,z_n\}\simeq \mathbb C_n$, so its fundamental group is $F_n$. From the long exact sequence of homotopy groups of this fiber bundle, one has:
$$
\pi_2(\mathbb C_n) \rightarrow \pi_2(M_{n+1}) \rightarrow \pi_2(M_n) \rightarrow \pi_1(\mathbb C_n) \rightarrow \pi_1(M_{n+1})\rightarrow \pi_1(M_n) \rightarrow 1.
$$
It is well known that $\pi_2(\mathbb C_n)=\{1\}$, and we already know the rightmost three groups in the above sequence. Hence we obtain:
\begin{equation}
1 \rightarrow \pi_2(M_{n+1}) \rightarrow \pi_2(M_n) \rightarrow F_n \rightarrow PB_{n+1} \rightarrow PB_n \rightarrow 1,
\end{equation}
for all $n\geq 1$.

Notice that $\pi_2(M_1)=\pi_2(\mathbb C)=\{1\}$. In particular, the above sequence for $n=1$ implies that $\pi_2(M_2)=\{1\}$. Recurrence on $n$ then shows that $\pi_2(M_n)=\{1\}$ for all $n$. This yields the exact sequence~(\ref{E:exact_sequence_FPP}). Also, the cross-section $s: M_n \rightarrow M_{n+1}$ yields a group section  $s:\: PB_n \rightarrow PB_{n+1}$, so the sequence~(\ref{E:exact_sequence_FPP}) splits.

There is an easy way to interpret the group section $s:\: PB_{n} \rightarrow PB_{n+1}$. One can consider the braids in $PB_n$ as collections of strands in $\mathbb D\times [0,1]$, where $\mathbb D \subset \mathbb C$, and the braids in $PB_{n+1}$ as collections of strands in $\mathbb C\times [0,1]$. If one considers a point $z_{n+1}\in \mathbb C\backslash \mathbb D$, then $s$ just adds to a braid in $PB_n$ a single vertical strand based at $z_{n+1}$. This is clearly a group homomorphism which is a section for $\rho$. Notice that this yields the same group section $s$ as the one defined by Fadell-Neuwirth's cross section: Indeed, the cross-section $s$ by Fadell-Neuwirth adds a point which is in the real line, always farther from the origin than the other $n$ points. Hence the braid obtained from that cross-section yields a new strand which does not cross with the other $n$ strands, in the same way as the section we just defined.

Now notice that we can look further to the left on the long exact sequence of homotopy groups, and we get for every $n\geq 1$ and every $k>2$:
$$
   \pi_{k}(\mathbb C_n) \rightarrow \pi_k(M_{n+1}) \rightarrow \pi_k(M_n) \rightarrow \pi_{k-1}(\mathbb C_n),
$$
as $\pi_k(\mathbb C_n)=\{1\}$ for all $k>1$, we have that $\pi_k(M_{n+1})\simeq \pi_k(M_n)$ for all $n\geq 1$. As $\pi_k(M_1)=\pi_k(\mathbb C)=\{1\}$, we finally obtain $\pi_k(M_n)=\{1\}$ for all $n\geq 1$ and all $k> 1$. That is, $M_n$ is a $K(\pi,1)$ space. Therefore, one has:

\begin{theorem}\cite{Fadell-Neuwirth}
$M_n$ and $N_n$ are $K(\pi,1)$ spaces.
\end{theorem}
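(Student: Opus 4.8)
The plan is to observe that essentially all of the work has already been assembled in the discussion preceding the statement, so that for $M_n$ only a repackaging is needed, while for $N_n$ a short covering-space argument suffices.

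First, for $M_n$: I would recall that $M_n$ is path-connected (noted at the outset), and that the long exact sequence of homotopy groups of the Fadell--Neuwirth fiber bundle $p: M_{n+1}\to M_n$, whose fiber is $\mathbb C_n$ with $\pi_k(\mathbb C_n)=\{1\}$ for all $k\geq 2$, gives an isomorphism $\pi_k(M_{n+1})\simeq \pi_k(M_n)$ for every $k\geq 2$ and every $n\geq 1$ (for $k=2$ one uses, as above, the vanishing of $\pi_2(\mathbb C_n)$ together with the injectivity coming from the section $s$; for $k>2$ both neighbouring terms $\pi_k(\mathbb C_n)$ and $\pi_{k-1}(\mathbb C_n)$ vanish). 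Since $M_1=\mathbb C$ is contractible, induction on $n$ yields $\pi_k(M_n)=\{1\}$ for all $n\geq 1$ and all $k\geq 2$, so $M_n$ is a $K(PB_n,1)$.

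Second, for $N_n$: the point is that $\Sigma_n$ acts \emph{freely} on $M_n$, since a permutation fixing a point $(z_1,\dots,z_n)\in M_n$ would force $z_i=z_j$ for some $i\neq j$, contradicting the definition of $M_n$. As $\Sigma_n$ is finite and acts freely, the action is properly discontinuous and the quotient map $M_n\to N_n=M_n/\Sigma_n$ is a (regular) covering map. Covering maps induce isomorphisms on all higher homotopy groups, so $\pi_k(N_n)\simeq \pi_k(M_n)=\{1\}$ for all $k\geq 2$; and $N_n$ is path-connected, being a continuous image of the path-connected space $M_n$. Hence $N_n$ is a $K(B_n,1)$.

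The argument is routine once the Fadell--Neuwirth bundle is granted; the only step that requires a moment of care is verifying that the $\Sigma_n$-action is free, so that $M_n\to N_n$ genuinely is a covering and the identification of higher homotopy groups is legitimate. (One could alternatively sidestep the covering-space step by rerunning the Fadell--Neuwirth induction directly for unordered configuration spaces, but the covering argument is shorter.)
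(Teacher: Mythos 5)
Your proof is correct and follows essentially the same route as the paper: the vanishing of the higher homotopy groups of $M_n$ via the long exact sequence of the Fadell--Neuwirth bundle and induction on $n$, followed by the observation that $M_n\to N_n$ is a covering, so the higher homotopy groups agree. The paper states the covering step more tersely (via contractibility of the common universal cover) and you usefully make explicit that the $\Sigma_n$-action is free; the only tiny slip is attributing the injectivity of $\pi_2(M_{n+1})\to\pi_2(M_n)$ to the section $s$, whereas it comes from $\pi_2(\mathbb C_n)=\{1\}$ (the section gives surjectivity, which the induction does not even need).
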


\begin{proof}
We already showed that $M_n$ is a $K(\pi,1)$ space, as $\pi_k(M_n)=\{1\}$ for all $k>1$. This means that the universal cover of $M_n$ is contractible. As $M_n$ is a covering space of $N_n$, it follows that $N_n$ is also $K(\pi,1)$.
\end{proof}

It is well known that the fundamental group of a $K(\pi,1)$ space is torsion free (see~\cite{Kassel-Turaev} for a proof). Hence we have:

\begin{corollary}
The braid group $B_n$ is torsion free.
\end{corollary}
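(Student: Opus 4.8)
The plan is to combine the two facts already established: that $B_n=\pi_1(N_n)$ by definition, and that $N_n$ is a $K(\pi,1)$ space by the preceding theorem. Granting the general principle that the fundamental group of an aspherical space has no torsion, the corollary is immediate; one may simply cite this principle from~\cite{Kassel-Turaev}. Since the point of these notes is to make the arguments visible, I would also recall why that principle holds, via cohomological dimension.

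Concretely, I would argue by contradiction. Suppose $g\in B_n$ has finite order $m>1$, and set $H=\langle g\rangle\cong \BbZ/m\BbZ$. Because $N_n$ is $K(\pi,1)$, its universal cover $\widetilde{N_n}$ is weakly contractible, hence contractible (it carries a CW homotopy type), and $H$ acts on $\widetilde{N_n}$ freely and properly discontinuously by deck transformations. Therefore the intermediate covering space $X=\widetilde{N_n}/H$ is a $K(H,1)$, i.e.\ a model for the classifying space of $\BbZ/m\BbZ$. Now $X$ is a covering space of the open $2n$-dimensional manifold $N_n$, hence is itself a $2n$-manifold, and so has the homotopy type of a CW complex of dimension at most $2n$; in particular $H^{k}(X;\BbZ)=0$ for all $k>2n$. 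On the other hand, since $X\simeq K(\BbZ/m\BbZ,1)$, its cohomology is the group cohomology of $\BbZ/m\BbZ$, whose positive part is periodic of period $2$: one has $H^{2k}(\BbZ/m\BbZ;\BbZ)\cong \BbZ/m\BbZ\neq 0$ for every $k\geq 1$. Taking $k=n+1$ yields $H^{2n+2}(X;\BbZ)\neq 0$ with $2n+2>2n$, a contradiction. Hence no such $g$ exists and $B_n$ is torsion free; the same argument applied to $M_n$ shows $PB_n$ is torsion free as well.

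I do not anticipate a genuine obstacle here, since both ingredients are in place: the asphericity of $N_n$ was just proved, and the torsion-freeness of $\pi_1$ of an aspherical space rests only on the periodicity of the cohomology of finite cyclic groups together with the fact that the relevant classifying space can be taken finite-dimensional. The one point deserving a word of care is precisely that finite-dimensionality, but it is automatic in our situation because $N_n$ is a manifold, so its covering spaces are manifolds of the same dimension and thus have finite-dimensional CW models.
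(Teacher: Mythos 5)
Your argument is correct and follows exactly the paper's route: the corollary is deduced from the asphericity of $N_n$ together with the general principle that the fundamental group of a finite-dimensional $K(\pi,1)$ is torsion free, which the paper simply cites from~\cite{Kassel-Turaev}. The only difference is that you also supply the standard proof of that principle (the intermediate cover $\widetilde{N_n}/\langle g\rangle$ would be a finite-dimensional $K(\mathbb{Z}/m\mathbb{Z},1)$, contradicting the $2$-periodic nonvanishing cohomology of finite cyclic groups), and that supplementary argument is sound.
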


\subsection{Second proof: Finite order automorphisms}\label{SS:finite_order}

The above proof of the torsion freeness of braid groups, although considered the most natural one by specialists, uses the result that fundamental groups of  $K(\pi,1)$ spaces are torsion free. We will now give another proof, which also uses a couple of well known results, but whose approach to the problem is completely different from the above one.

\begin{theorem}[Nielsen realization theorem for cyclic groups]\cite{Nielsen}
Let $S$ be a closed orientable surface of genus $g\geq 0$, and $q\geq 0$ punctures. Given a mapping class $\varphi \in \mathcal M(S)$ of finite order, there exists a homeomorphism $f\in Aut(S)$ representing $\varphi$, having the same order.
\end{theorem}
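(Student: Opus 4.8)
The plan is to realise $\varphi$ by a \emph{hyperbolic isometry}, the existence of which I will extract from a fixed point of the cyclic group $G=\langle\varphi\rangle$ acting on Teichm\"uller space. First I would treat separately the finitely many surfaces with $\chi(S)\geq 0$, namely the sphere with at most two punctures and the torus: here $\mathcal M(S)$ is trivial, $\mathbb Z/2$, or $SL_2(\mathbb Z)$, and in each case a finite-order class is plainly realised by a homeomorphism of the same order (a rotation of the sphere, or a finite-order linear automorphism of $\mathbb R^2/\mathbb Z^2$). So from now on assume $S$ carries a complete hyperbolic metric of finite area.

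Now bring in the Teichm\"uller space $\mathcal T(S)$ of marked finite-area hyperbolic structures on $S$, which is a cell of dimension $6g-6+2q$ on which $\mathcal M(S)$ acts by changing the marking. The key structural fact is that a class $\psi$ fixes a point of $\mathcal T(S)$ exactly when the underlying metric admits an orientation-preserving isometry lying in the isotopy class $\psi$. So everything reduces to producing a $G$-fixed point in $\mathcal T(S)$. For this I would use that $\mathcal T(S)$ carries the Weil--Petersson metric, whose metric completion $\overline{\mathcal T(S)}$ is a complete CAT$(0)$ space on which $\mathcal M(S)$ acts by isometries; the finite group $G$ then fixes a point of $\overline{\mathcal T(S)}$, e.g.\ the circumcentre of any orbit. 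It remains to check that such a fixed point actually lies in $\mathcal T(S)$, not in the added locus of noded surfaces: if it lay there, $G$ would preserve a multicurve and one could conclude by induction on the complexity of $S$. (A self-contained alternative, in the spirit of Kerckhoff, is to fix a finite $G$-invariant family of simple closed curves that fills $S$ and minimise the sum of their hyperbolic lengths over $\mathcal T(S)$; convexity of length functions along earthquake paths makes this a proper, strictly convex, $G$-invariant function, whose unique minimum is the required fixed point.)

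With a $G$-fixed hyperbolic metric $p\in\mathcal T(S)$ in hand, $\varphi$ is represented by some $f\in Aut(S)$ which is an isometry of this metric. Finally, the natural homomorphism $\mathrm{Isom}^+(S)\to\mathcal M(S)$ is injective: an isometry isotopic to the identity acts trivially (up to conjugation) on $\pi_1(S)$, hence lifts to an isometry of $\mathbb H^2$ commuting with the whole deck group, which forces it to be the identity. Consequently $f$ has the same order in $\mathrm{Isom}^+(S)$ as its image $\varphi$ has in $\mathcal M(S)$, namely the prescribed finite order, and we are done.

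The real content, and the step I expect to be the obstacle, is the fixed-point statement for the $G$-action on $\mathcal T(S)$: whichever route one takes (CAT$(0)$ geometry of the Weil--Petersson completion, Kerckhoff's convexity, or---exploiting that $G$ is cyclic---a Smith-theory argument on the contractible manifold $\mathcal T(S)$) one is essentially reproving Nielsen's theorem, so the honest thing is to isolate it as the crux. By contrast the reduction to the hyperbolic case, the dictionary between fixed points and invariant metrics, and the injectivity of $\mathrm{Isom}^+(S)\to\mathcal M(S)$ are routine.
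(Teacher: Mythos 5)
The paper does not prove this theorem at all: it is quoted from Nielsen's work with a citation and used as a black box (the only accompanying text is a remark translating between Nielsen's boundary-component formulation and the puncture formulation). So there is no proof in the paper to compare yours against; the question is only whether your attempt stands on its own, and it does not. You have correctly assembled the standard modern reduction --- dispose of $\chi(S)\geq 0$, identify fixed points of $\mathcal M(S)$ acting on Teichm\"uller space with isotopy classes containing isometries, and use injectivity of $\mathrm{Isom}^+(S)\to\mathcal M(S)$ to get the exact order --- but, as you yourself say, the entire content of the theorem is the fixed-point statement for the $G$-action on $\mathcal T(S)$, and none of the three routes you gesture at is carried out. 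The Weil--Petersson route requires proving that the completion is CAT$(0)$ and, more delicately, that the circumcentre of an orbit does not land on the noded boundary (your ``conclude by induction'' is exactly where the work lives); the Kerckhoff route requires the convexity of length functions along earthquake paths, which is a hard theorem in its own right. A sketch that defers the crux to an unproved fixed-point theorem is a reduction, not a proof.

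One of your proposed escape hatches is also genuinely wrong as stated: Smith theory on the contractible manifold $\mathcal T(S)$ does not ``essentially'' give the fixed point for a general finite cyclic group. Smith theory yields nonempty fixed-point sets for actions of $p$-groups on mod-$p$ acyclic finite-dimensional spaces; for cyclic groups of non-prime-power order there exist fixed-point-free actions on disks (Floyd--Richardson), so that route requires a separate bootstrapping argument from the prime-power case and cannot be dismissed as routine. If you want a proof in the spirit of what can actually be written down for cyclic groups, Nielsen's original argument works on the circle at infinity of $\mathbb H^2$ with the induced action of a lift of $\varphi$ on $\pi_1(S)$ and its boundary, which is a different (and for the cyclic case, self-contained) mechanism from any of the Teichm\"uller-theoretic ones you list.
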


\begin{remark}
The actual statement given by Nielsen in~\cite{Nielsen} talks about boundary components instead of punctures, but for him the homeomorphisms that define the mapping classes do not fix the boundary pointwise, but setwise. This implies, for instance, that a Dehn twist along a curve which is parallel to a boundary component determines a trivial mapping class. Hence, a boundary component is equivalent to a puncture in Nielsen's setting. Since we are assuming that our allowed automorphisms fix the boundary pointwise, we prefer to state Nielsen's result the above way. But it is important that Nielsen's original result refers to boundary components, as we will see below.
\end{remark}

We will use Nielsen realization theorem to study braids, by applying it to the $(n+1)$-times punctured sphere $\mathbb S_{n+1}$. Recall that the full twist $\Delta^2\in B_n$ is the braid determined by a Dehn twist along a curve parallel to the boundary of $\mathbb D_n$. Notice that considering the quotient $B_n/\langle\Delta^2\rangle$ corresponds to collapsing the boundary of $\mathbb D_n$ to a puncture, so $B_n/\langle\Delta^2\rangle \simeq \mathcal M(\mathbb S_{n+1},\{\infty\})\subset \mathcal M(\mathbb S_{n+1})$, where we denote by $\{\infty\}$ one of the punctures of $\mathbb S_{n+1}$ (the one corresponding to the boundary of $\mathbb D_n$).

The Nielsen realization theorem tells us that a finite, cyclic subgroup of $\mathcal M(\mathbb S_{n+1})$ can be realized by a subgroup of $Aut(\mathbb S_{n+1})$. Clearly, if the mapping classes that we consider preserve $\{\infty\}$, so do the corresponding automorphisms.  Thanks to this realization, we can deduce results about mapping classes from results about automorphisms. For instance, we can use the following well known result by K\'er\'ekjart\'o~\cite{Kerekjarto} and Eilenberg~\cite{Eilenberg}, beautifully explained in~\cite{Constantin-Kolev}, which states that an orientable, finite order homeomorphism of the disc $\mathbb D^2$ is conjugate to a rotation:

\begin{theorem}\label{T:Kerekjarto}\cite{Kerekjarto,Eilenberg}
Let $f:\: \mathbb D^2 \rightarrow \mathbb D^2$ be a finite order homeomorphism. Then there exists $r\in O(2)$ and a homeomorphism $h:\: \mathbb D^2 \rightarrow \mathbb D^2$ such that $f=hrh^{-1}$.
\end{theorem}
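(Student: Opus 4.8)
\emph{Proof proposal.} The plan is to treat the orientation-preserving case by identifying $f$ with a generator of the group of deck transformations of a cyclic branched covering of the disc --- a deck group which is then conjugated onto a group of rotations --- and to deduce the orientation-reversing case from it. (A shorter but less self-contained route would be to use that a finite-order homeomorphism of a surface is conjugate to a smooth one, average a Riemannian metric over $G=\langle f\rangle$, uniformize, and observe that $f$ then becomes a finite-order conformal or anticonformal automorphism of a standard disc, hence conjugate within its isometry group to an element of $O(2)$; I would mention this but prefer the topological argument below, whose difficulty sits in the same place.)

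So let $f$ be orientation-preserving of order $n\geq 2$. Its interior is homeomorphic to $\mathbb R^2$, and since $f\neq\mathrm{id}$ some interior point has an $f$-orbit of size $>1$; by Brouwer's plane translation theorem an orientation-preserving homeomorphism of $\mathbb R^2$ possessing a periodic point has a fixed point, so $f$ fixes some $a$ in the interior, and hence $G=\langle f\rangle$ fixes $a$. Pass to the orbit space $Q=\mathbb D^2/G$ with quotient map $\pi\colon\mathbb D^2\to Q$. One checks that $Q$ is a compact orientable surface with boundary, that $\partial Q=\partial\mathbb D^2/\langle f|_{\partial\mathbb D^2}\rangle$ is a single circle (a finite-order homeomorphism of $S^1$ is conjugate to a rotation, so its quotient is again $S^1$), and --- this is the crux --- that $[a]$ is the only point of $Q$ near which $\pi$ fails to be a covering map, the local degree there being exactly $n$. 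Granting this, the orbifold Euler characteristic relation $\chi(\mathbb D^2)=n\bigl(\chi(Q)-(1-\tfrac1n)\bigr)$ forces $\chi(Q)=1$, so $Q\cong\mathbb D^2$, and $\pi$ restricts to an honest $n$-fold cyclic covering of the annulus $Q\setminus\{[a]\}$ by $\mathbb D^2\setminus\{a\}$.

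Since a connected annulus has, up to equivalence, a unique connected $n$-fold cover --- realized by $z\mapsto z^n$ on a punctured disc, with deck group the rotation group $\{z\mapsto\zeta z:\zeta^n=1\}$ --- and this extends over the branch point, $\pi$ is equivalent as a branched cover to $z\mapsto z^n$ on $\mathbb D^2$. Hence there is a homeomorphism $h\colon\mathbb D^2\to\mathbb D^2$ conjugating the $G$-action to the standard rotation action, so $hfh^{-1}$ is rotation by $2\pi k/n$ for some $k$ coprime to $n$, an element of $O(2)$. If $f$ is orientation-reversing its order is even, say $2k$; the subgroup $\langle f^2\rangle$ is orientation-preserving and, by the above, may be taken to consist of rotations, and a short separate argument --- with base case the fact that an orientation-reversing involution of $\mathbb D^2$ is conjugate to a reflection, via the arc of its fixed-point set --- conjugates $f$ into $O(2)$ as well. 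The main obstacle is the crux step: excluding wild local behaviour of the $G$-action at $a$ and along $\partial\mathbb D^2$, equivalently showing that the action is locally conjugate to a linear one and that $Q$ is genuinely a $2$-manifold. This is the delicate heart of the theorem --- historically the source of gaps in early proofs --- and cannot be avoided; it is treated in~\cite{Constantin-Kolev} by combining Brouwer-theoretic plane topology with Smith theory.
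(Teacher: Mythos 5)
The paper does not prove this statement: Theorem~\ref{T:Kerekjarto} is imported verbatim from K\'er\'ekjart\'o and Eilenberg, with~\cite{Constantin-Kolev} cited as the modern exposition, so there is no internal proof to compare yours against. Judged on its own, your outline follows the standard modern route (fixed point via Brouwer, pass to the quotient, Riemann--Hurwitz to identify the quotient as a disc, classify the cyclic branched cover of the annulus, conjugate the deck group onto rotations), and that skeleton is correct.

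However, as a proof it has a genuine gap, and it is exactly the one you flag yourself: everything in your argument downstream of the sentence beginning ``One checks that $Q$ is a compact orientable surface\ldots'' presupposes that the set of points with nontrivial stabilizer is the single point $a$, that the action on $\partial\mathbb D^2$ is free (so that no proper power of $f$ fixes the boundary pointwise), and that $\pi$ is locally modelled on $z\mapsto z^n$ at $a$. Off the singular set a free action of a finite group on a Hausdorff manifold is a covering-space action, so the quotient is automatically a manifold there; the entire content of the theorem is therefore concentrated in ruling out a wild or non-discrete singular set, which for merely continuous actions requires Newman's theorem/Smith theory (or the Brouwer-theoretic plane topology of~\cite{Constantin-Kolev}). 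Deferring that step to the literature means you have reduced the theorem to its known hard core rather than proved it; note also that your Euler-characteristic computation silently assumes exactly one totally ramified branch point, which is part of what must be established. Two smaller omissions: in the orientation-reversing case you must show the order is exactly $2$ (orientation-reversing elements of $O(2)$ are reflections); this does follow from the orientation-preserving case together with the elementary fact that a finite-order orientation-reversing homeomorphism of $S^1$ is an involution, but your ``short separate argument'' should say so. And the conjugacy of an orientation-reversing involution to a reflection ``via the arc of its fixed-point set'' again needs Smith theory to know the fixed set is an arc. If you intend this as a self-contained proof, the crux must be supplied; if you intend it as a reduction to~\cite{Constantin-Kolev}, say so explicitly, which is essentially what the paper itself does by citing the theorem without proof.
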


Let us put together the two results above, to show that $B_n$ is torsion free. Suppose that $\alpha\in B_n$ is a braid such that $\alpha^m=1$, and choose an automorphism $g$ of $\mathbb D_n$ representing $\alpha$. If we denote by $[\alpha]$ the canonical projection of $\alpha$ in $B_n/\langle \Delta^2\rangle$, we get that $[\alpha]$ is a mapping class of $\mathbb S_{n+1}$, fixing $\{\infty\}$, such that $[\alpha]^m=1$.
$$
    \begin{array}{ccccc}
            Aut(\mathbb D_n) & \longrightarrow  & \mathcal M(\mathbb D_n) & \longrightarrow & \mathcal M(\mathbb S_{n+1},\{\infty\}) \\
                   g         & \longmapsto      &           \alpha        & \longmapsto     &             [\alpha]
    \end{array}
$$

By the Nielsen realization theorem, there is an automorphism $\phi$ of $\mathbb S_{n+1}$, fixing $\{\infty\}$ and representing $[\alpha]$, such that $\phi^m=\mbox{id}_{\mathbb S_{n+1}}$. Recall that Nielsen's theorem was originally stated for surfaces with boundary components: this allows us to {\it expand} the puncture $\{\infty\}$ to a boundary component, so $\phi$ determines a well defined automorphism $f$ of the punctured disc $\mathbb D_n$. We then have:
$$
    \begin{array}{ccccc}
            Aut(\mathbb D_n) & \longrightarrow  & Aut(\mathbb S_{n+1}, \{\infty\}) & \longrightarrow & \mathcal M(\mathbb S_{n+1},\{\infty\}) \\
                   f         & \longmapsto      &           \phi        & \longmapsto     &             [\alpha]
    \end{array}
$$
where the first map is induced by collapsing the boundary to a puncture, $Aut(\mathbb S_{n+1},\{\infty\})$ denotes the group of automorphisms of $\mathbb S_{n+1}$ fixing the puncture $\{\infty\}$, and the second map is the usual projection.

Notice also that $f$ does not necessarily fix the boundary of $\mathbb D_n$ pointwise. Nevertheless, as $f$ and $g$ represent $[\alpha]$, they are isotopic to each other (where the isotopy fixes the punctures but may rotate the boundary of $\mathbb D_n$). The important fact is that $f^m=\mbox{id}_{\mathbb D_n}$.

We can now use K\'er\'ekjart\'o-Eilenberg's theorem. By filling the punctures, $f$ and $g$ determine automorphisms of $\mathbb D^2$, that we will also denote $f$ and $g$ respectively.  Since $f^m=\mbox{id}_{\mathbb D^2}$, by Theorem~\ref{T:Kerekjarto} it follows that $f$ is conjugate in $Aut(\mathbb D^2)$ to a rotation $r$ of the disc.

Remark that a conjugation in $Aut(\mathbb D^2)$ may change the positions of the $n$ distinguished points (the punctures), and $r$ is a rotation of $\mathbb D^2$ that preserves this set of $n$ distinguished points. One of these points may be the center of $\mathbb D^2$, and in this case it is a fixed point of $r$. All other distinguished points have orbits of the same size (depending on the angle of the rotation), each orbit consisting of a set of points evenly distributed in a circumference centered at the origin. By a further conjugation in $Aut(\mathbb D^2)$, we can place all orbits of non-fixed distinguished points into the same circumference centered at the origin, so that all distinguished points (except the fixed one if this is the case) are evenly distributed in that circumference (see Figure~\ref{Figure_5}).

\begin{figure}[ht]
\centerline{\includegraphics{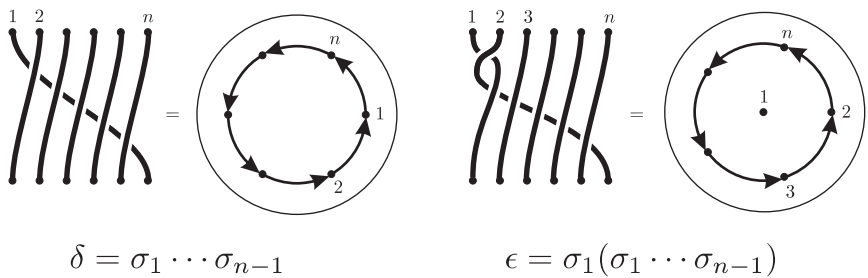}} \caption{Every periodic braid is conjugate to a power of either $\delta$ or $\epsilon$.}
\label{Figure_5}
\end{figure}

By performing an isotopy close to the boundary (which becomes trivial after collapsing the boundary to a puncture), we can transform the conjugate automorphisms $f$ and $r$ into conjugate automorphisms $f'$ and $r'$ which fix the boundary pointwise (but which are no longer of finite order). Looking at the motion of the $n$ distinguished points performed by $r'$ (a rotation), it is clear that $r'$ is conjugate in $Aut(\mathbb D^2)$ to an automorphism $h$ fixing the set $\{1,\ldots,n\}$ and
inducing one of the following braids: either a power of $\epsilon=\sigma_1(\sigma_1\cdots \sigma_{n-1})$ or a power of $\delta=\sigma_1\cdots\sigma_{n-1}$ (depending whether the center of $\mathbb D^2$ is a distinguished point or not, see Figure~\ref{Figure_5}).

Recall that we started with a braid $\alpha$, and a representative $g\in Aut(\mathbb D^2)$. Up to isotopy (possibly rotating the boundary), we have seen that $g$ is conjugate in $Aut(\mathbb D^2)$ to $h$. As both $g$ and $h$ preserve $\{1,\ldots,n\}$ setwise, and fix the boundary pointwise, the conjugating automorphism preserves $\{1,\ldots,n\}$ setwise, and can be taken to fix the boundary pointwise (recall that $g$ is considered up to rotations of the boundary). This means that the braids induced by $g$ and $h$ are conjugate, up to multiplication by a power of $\Delta^2$ (a full rotation of the boundary). In other words, $\alpha$ is conjugate to either $\delta^t \Delta^{2m}$ or to $\epsilon^t\Delta^{2m}$ for some $t,m\in \mathbb Z$. As $\Delta^2=\delta^{n}=\epsilon^{n-1}$, we finally obtain that $\alpha$ is conjugate to either $\delta^k$ or $\epsilon^k$ for some $k\in \mathbb Z$.

It just remains to show that $k=0$. This can be easily seen by noticing that $\delta$ and $\epsilon$ do not have finite order (while $\alpha$ does). Consider the homomorphism $s:\: B_n\rightarrow \mathbb Z$ that sends $\sigma_i$ to $1$, for $i=1,\ldots,n-1$. As the defining relations of $B_n$ are homogeneous, $s$ is well defined. As $s(\delta^k)=(n-1)k$ and $s(\epsilon^k)=nk$ for every every integer $k$, it follows that neither $\delta$ nor $\epsilon$ has finite order. This shows that $B_n$ is torsion free.

\subsection{Third proof: Left orderability}

We will now give yet another proof of the torsion-freeness of braid groups. The first one was mainly topological, while the second one was more geometric; The third one will be an algebraic proof, relying on the fact that braid groups are left-orderable.

\begin{definition}
A group $G$ is said to be left-orderable if it admits a total order of its elements which is invariant under left-multiplication. That is,
$$
    a < b \Rightarrow ca < cb, \qquad \forall a,b,c\in G.
$$
\end{definition}

Notice that the total order $\leqslant$ defines a semigroup contained in $G$, namely $P=\{a\in G; \; 1<a\}$, called the semigroup of {\it positive elements}. As the order is invariant under left multiplication, one has $1<a$ if and only if $a^{-1}<1$. This means that the set $\{b\in G; \; b<1\}$ is precisely $P^{-1}$, the set of inverses of elements in $P$. Notice that one then has $G=P\sqcup \{1\}\sqcup P^{-1}$.  This is actually an equivalent condition to left-orderability: A group $G$ is left-orderable if and only if it admits a sub-semigroup $P$ such that $G=P\sqcup \{1\}\sqcup P^{-1}$. We just need to define the order by saying that $a<b$ if and only if $a^{-1}b\in P$.

The relation between braid groups and orderability is given by the following result:

\begin{theorem}\cite{Dehornoy_1994}
The braid group $B_n$ is left-orderable for all $n\geq 1$.
\end{theorem}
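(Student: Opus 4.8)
The plan is to exhibit an explicit positive cone and then invoke the criterion recalled just above the statement: it suffices to produce a sub-semigroup $P\subseteq B_n$ with $B_n=P\sqcup\{1\}\sqcup P^{-1}$, the order being then $a<b\iff a^{-1}b\in P$. Following Dehornoy, $P$ will be the set of \emph{$\sigma$-positive} braids. Call a word $w$ on $\sigma_1^{\pm1},\dots,\sigma_{n-1}^{\pm1}$ \emph{$i$-positive} if $i$ is the least index for which $\sigma_i^{\pm1}$ occurs in $w$ and every such occurrence is $\sigma_i$ (never $\sigma_i^{-1}$); say $w$ is \emph{$\sigma$-positive} if it is $i$-positive for some $i$, and \emph{$\sigma$-negative} if $w^{-1}$ is $\sigma$-positive. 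Let $P$ be the set of braids admitting some $\sigma$-positive representative word. Then $P$ is closed under multiplication: if $w_1$ is $i_1$-positive and $w_2$ is $i_2$-positive, the concatenation $w_1w_2$ is $\min(i_1,i_2)$-positive, since no $\sigma_k^{\pm1}$ with $k<\min(i_1,i_2)$ occurs in it and neither does $\sigma_{\min(i_1,i_2)}^{-1}$.

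The theorem then follows from two facts about $\sigma$-positive words:
\begin{enumerate}
\item[(A)] (\emph{Acyclicity}) no $\sigma$-positive word represents the trivial braid;
\item[(S)] (\emph{Comparison}) every braid is represented by the empty word, or by a $\sigma$-positive word, or by a $\sigma$-negative word.
\end{enumerate}
Granting these: (A) gives $1\notin P$, and also $P\cap P^{-1}=\emptyset$, for if $\beta$ and $\beta^{-1}$ had $\sigma$-positive representatives $w$ and $u$ then $wu$ would be a $\sigma$-positive representative of $1$; and (S) is precisely $B_n=P\cup\{1\}\cup P^{-1}$, so this union is disjoint as required. Thus everything reduces to (A) and (S).

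For (S) I would run \emph{handle reduction}. Given a braid word, one locates a \emph{$\sigma_i$-handle} — a factor $\sigma_i^{e}\,v\,\sigma_i^{-e}$ ($e=\pm1$) in which $v$ contains no $\sigma_i^{\pm1}$ and no $\sigma_{i-1}^{\pm1}$ (subject to a fixed priority rule telling which handle to treat first) — and rewrites it via the braid relations: letters of $v$ of index $\ge i+2$ commute past $\sigma_i^{\pm1}$ unchanged, each $\sigma_{i+1}^{d}$ in $v$ is replaced by $\sigma_{i+1}^{-e}\sigma_i^{d}\sigma_{i+1}^{e}$, and the two framing letters $\sigma_i^{e},\sigma_i^{-e}$ then cancel. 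The key structural point is that this creates no letter of index $<i$. One shows that the iteration terminates, and that a word containing no handle at the least index occurring in it is $\sigma$-positive, $\sigma$-negative or empty. The delicate part of this step is the termination of handle reduction.

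For (A) I would use the faithful Artin representation $\rho\colon B_n\hookrightarrow Aut(F_n)$, $F_n=\langle x_1,\dots,x_n\rangle$, which is already available. Since every $\sigma_j^{\pm1}$ with $j\ge i$ fixes $x_1,\dots,x_{i-1}$ and maps $x_i,\dots,x_n$ into words on $x_i,\dots,x_n$, acting there just as $\sigma_1^{\pm1}$ acts on $x_1,\dots,x_{n-i+1}$, a relabelling reduces (A) to the following: if $\beta\in B_n$ is $\sigma_1$-positive then $\rho_\beta(x_1)\neq x_1$ — which already forces $\beta\neq 1$. Using $\rho_{\sigma_1}(x_1)=x_2$ and $\rho_{\sigma_1}(x_2)=x_2^{-1}x_1x_2$, I would prove by induction on the length of a $\sigma_1$-positive word for $\beta$ that the reduced word representing $\rho_\beta(x_1)$ has a controlled form — beginning and ending with letters among $x_2^{\pm1},\dots,x_n^{\pm1}$ and never collapsing to $x_1$ — which rules out $\rho_\beta(x_1)=x_1$. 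I expect (A) to be the main obstacle: finding an inductive invariant robust enough to survive right-multiplication by an arbitrary further $\sigma_1$-positive word (not merely by one letter $\sigma_j^{\pm1}$) is the real content, essentially Larue's argument; Dehornoy's original route instead deduces both (A) and (S) from the free left self-distributive structure carried by braids. A fully geometric alternative is to work with $B_n=\mathcal M(\mathbb D_n)$ and read off the sign of a braid from its effect on a fixed system of arcs (the Nielsen--Thurston / curve-diagram approach), which yields (S) and (A) at once at the cost of more topological preparation.
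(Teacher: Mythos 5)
Your formal reduction is correct and matches the cone the paper describes: $P$ as you define it is exactly Dehornoy's semigroup, the concatenation argument for closure under multiplication works, and the deduction of $1\notin P$ and $P\cap P^{-1}=\emptyset$ from Property (A), together with the identification of Property (S) with $B_n=P\cup\{1\}\cup P^{-1}$, is clean. But you should be clear-eyed that (A) and (S) \emph{are} the theorem; what you have actually proved is only the easy formal shell around them. For (S), the termination of handle reduction is a genuinely hard combinatorial fact (Dehornoy's proofs use either the ordering itself or a convexity property of the Cayley graph of $B_n^+$; naive length or counting arguments fail because reduction can lengthen the word), so as written (S) is asserted, not established. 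For (A), the invariant you propose for $\rho_\beta(x_1)$ --- ``begins and ends with letters of index $\ge 2$ and never collapses to $x_1$'' --- is not strong enough to close the induction: the images of adjacent letters can cancel into each other and destroy the first and last letters, which is precisely why Larue's lemma has to track a more delicate property of the reduced word (one that is stable under right multiplication by every generator occurring in a $\sigma_1$-positive word, not just under appending $\sigma_1$). You acknowledge both difficulties, which is honest, but it means the two pillars of the proof are described rather than proved. The reduction of (A) from $i$-positive to $1$-positive words via the standard embedding of $B_{n-i+1}$ acting on $\langle x_i,\ldots,x_n\rangle$ is fine.

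For comparison with the paper: the paper does not prove this theorem at all. It cites \cite{Dehornoy_1994}, explicitly describes proving $B_n=P\sqcup\{1\}\sqcup P^{-1}$ by Dehornoy's original (self-distributivity) route as ``highly nontrivial,'' and instead presents the geometric reformulation of \cite{FGRRW}: read the sign of $\beta$ off the image $\beta(E)$ of the real diameter of $\mathbb D_n$, for which the trichotomy and the semigroup property are visibly true once the curve diagram is put in minimal position. That is the route you mention only in your final sentence; it delivers (A) and (S) simultaneously at the cost of setting up curve diagrams carefully. Your route --- handle reduction for (S) and the Artin representation \`a la Larue for (A) --- is the standard combinatorial alternative and is perfectly viable, but a complete write-up of either pillar would dwarf what you have written.
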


The left-order defined in~\cite{Dehornoy_1994} is called {\it Dehornoy's ordering}.  The sub-semigroup of $B_n$ which defines Dehornoy's ordering is defined as follows: $P$ is the set of nontrivial braids that can be expressed as a word in which the $\sigma_i$ of smallest index appears only with positive exponents. It is clear that $P$ is a semigroup; but proving that $B_n=P\sqcup \{1\}\sqcup P^{-1}$ using Dehornoy's approach is highly nontrivial.

But there is an interpretation of Dehornoy's ordering, given in~\cite{FGRRW}, which shows in an easy way that $B_n=P\sqcup \{1\}\sqcup P^{-1}$. One just needs to look at braids as (isotopy classes of) automorphisms of $\mathbb D_n$. Recall that we are considering $\mathbb D_n$ as a subset of $\mathbb C$, and the $n$ punctures lying in the real line. Let $E=\mathbb R\cap \mathbb D_n$, that is, the diameter containing the punctures. Notice that $E$ is the disjoint union of $n+1$ segments, that we will denote $E_1,\cdots,E_{n+1}$. Given a braid $\beta$, we consider the image of the above diameter, $\beta(E)$. It is defined up to isotopy of $\mathbb D_n$, and it can always be isotoped to a curve satisfying the following condition: whenever it is possible, $\beta(E_i)$ will be isotoped to a horizontal segment (i.e. some $E_j$), and if this is not the case, $\beta(E_i)$ will be isotoped to have the minimal possible number of intersections with $E$. Given such a representation of $\beta(E)$, one can say that $\beta\in P$ if the initial part of the segment $\beta(E_i)$ belongs to the lower half-disc, where $i$ is the smallest index such that $\beta(E_i) \neq E_i$. In other words, $\beta\in P$ if the first non-horizontal segment in $\beta(E)$ goes downwards. An example can be seen in Figure~\ref{Figure_6}: The braid $\beta=\sigma_2^{-1}\sigma_3\sigma_2\in B_4$ sends $E$ to the curve drawn in the picture, so $\beta$ is a positive braid. Algebraically, $\beta$ is positive as it is equal to $\sigma_3\sigma_2\sigma_3^{-1}$, and in this word only positive powers of $\sigma_2$ appear.

\begin{figure}[ht]
\centerline{\includegraphics{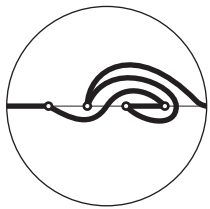}} \caption{The braid $\beta=\sigma_2^{-1}\sigma_3\sigma_2\in B_4$ is positive.}
\label{Figure_6}
\end{figure}

\begin{theorem}\cite{FGRRW}
The set $P$ defined above is a well defined sub-semigroup of $B_n$, such that $B_n=P\sqcup\{1\}\sqcup P^{-1}$. Moreover, $P$ coincides with the semigroup defining Dehornoy's ordering.
\end{theorem}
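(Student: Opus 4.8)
The plan is to follow the geometric route of~\cite{FGRRW}, working throughout with \emph{curve diagrams}. A braid $\beta\in B_n$ is regarded as an isotopy class of homeomorphisms of $\mathbb D_n$ fixing $\partial\mathbb D$ pointwise, and to it one attaches the arc system $\beta(E)$, well defined up to isotopy of $\mathbb D_n$ rel $\partial\mathbb D$ (the two endpoints of $E$ on $\partial\mathbb D$ stay fixed; the others may permute among the punctures). The whole argument revolves around one invariant of $\beta$: reading $\beta(E)$ starting from the boundary point $-1$, either $\beta(E)=E$, or there is a first place at which $\beta(E)$ leaves $E$, and that departure goes ``down'' (into the lower half-disc) or ``up''. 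So the first task is to show that the isotopy class of $\beta(E)$ has a canonical \emph{taut} representative --- one meeting $E$ transversally in the least possible number of points, in which moreover every component that is isotopic rel endpoints to some $E_j$ is isotoped onto $E_j$ --- and that this representative is unique up to ambient isotopy of $\mathbb D_n$ preserving $E$ setwise. Existence comes from repeatedly removing innermost bigons and half-bigons formed by $\beta(E)$ and $E$; uniqueness is the bigon criterion for arc systems on a surface (isotopic arc systems in minimal position are ambient isotopic), applied to the pair $(E,\beta(E))$ rel the punctures and the fixed boundary points. Granting this, the index $i$ with $\beta(E_i)$ the first non-horizontal arc, and the side into which $\beta(E_i)$ first plunges, depend only on $\beta$; so membership in $P$ is well defined, $\beta=1$ exactly when the taut form of $\beta(E)$ is $E$ itself, and every $\beta\in B_n$ falls into precisely one of the classes ``trivial'', ``first departure down'' $(=P)$, ``first departure up''.

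Next I would show $P$ is closed under products. The crucial point is that positivity is \emph{localised near a puncture}: if $\beta\in P$ has first non-horizontal arc $\beta(E_i)$, then, since $\beta(E_k)=E_k$ for $k<i$ in the taut form, $\beta$ may be chosen to be the identity on a neighbourhood of the connected sub-arc $\overline{E_1\cup\cdots\cup E_{i-1}}$ (and on a collar of $\partial\mathbb D$), so that applying $\beta$ to any curve leaves intact what that curve does near $p_{i-1}$. Taking $\beta,\gamma\in P$ with first non-horizontal arcs $\beta(E_i)$ and $\gamma(E_j)$ and analysing $(\beta\gamma)(E)=\beta(\gamma(E))$ in the two cases $i\le j$ and $i>j$, one checks that the downward first departure of the relevant factor is preserved when the other factor is applied and --- using the innermost-bigon bookkeeping of the first step once more --- survives reduction to taut form; hence $\beta\gamma\in P$. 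Since $1\notin P$, disjointness is then automatic ($\beta,\beta^{-1}\in P$ would force $1=\beta\beta^{-1}\in P$), so together with the trichotomy above one obtains $B_n=P\sqcup\{1\}\sqcup P^{-1}$, with $P^{-1}$ the ``first departure up'' class.

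Finally, for the identification with Dehornoy's semigroup, let $P_D$ be the set of $\sigma$-positive braids. With the orientation convention of Figure~\ref{Figure_2}, inspecting the curve diagram of $\sigma_i$ shows $\sigma_i\in P$: it fixes $E_1,\ldots,E_{i-1}$, while $\sigma_i(E_i)$ runs along $E_i$ as far as the support disc of $\sigma_i$ and then leaves $E$ downwards. To prove $P_D\subseteq P$, write a $\sigma$-positive $\beta$ as $w_0\sigma_i w_1\sigma_i\cdots\sigma_i w_k$ with $k\ge 1$ and each $w_j$ a word in $\sigma_{i+1}^{\pm1},\ldots,\sigma_{n-1}^{\pm1}$; since every $\sigma_i$ and every $w_j$ is supported away from $p_{i-1}$, an induction on the word, from the innermost letter outwards, shows that $\beta$ fixes $E_1,\ldots,E_{i-1}$ and that $\beta(E_i)$ still makes its first departure downwards, so $\beta\in P$ by the previous steps. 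Equality $P=P_D$ then follows formally: $P_D\subseteq P$ gives $P_D^{-1}\subseteq P^{-1}$, and since both $P\sqcup\{1\}\sqcup P^{-1}$ and $P_D\sqcup\{1\}\sqcup P_D^{-1}$ (the latter by Dehornoy's theorem, or directly by proving the reverse inclusion $P\subseteq P_D$ through an induction on the number of intersections of the taut form with $E$, i.e. geometric handle reduction) partition $B_n$, there is no room for a braid in $P\setminus P_D$.

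The genuinely delicate part is the combinatorial core of the first two steps: pinning down the precise normalisation that makes the taut form unique, and proving that the sign of the first departure of a curve diagram from $E$ is robust --- unaffected by bigon and half-bigon reduction, and correctly trackable through composition. (A naive version can fail: a non-taut representative may acquire a spurious ``downward'' excursion that disappears on reduction.) Once this robustness is secured, the semigroup law, the trichotomy, the positivity of the $\sigma_i$, and the comparison with Dehornoy's definition all reduce to soft geometric bookkeeping.
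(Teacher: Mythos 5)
Your outline follows the same geometric route the paper sketches (and that \cite{FGRRW} carries out in full): taut curve diagrams and bigon reduction for well-definedness, the resulting trichotomy on the sign of the first departure of $\beta(E)$ from $E$, localisation of the homeomorphism near the fixed sub-arc for the semigroup property, and the inclusion of the $\sigma$-positive braids into $P$ combined with Dehornoy's trichotomy to force the two semigroups to coincide. The delicate points you flag --- uniqueness of the taut representative and robustness of the first-departure sign under composition and bigon reduction --- are precisely the ones the paper itself declines to prove and defers to \cite{FGRRW}, so your proposal is consistent with, and somewhat more detailed than, the paper's own argument.
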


The more delicate part of the above theorem's proof is to rigourously show that $P$ is well defined, but it is in any case quite manifest that this is true, and one can find the details in~\cite{FGRRW}. Notice that it is trivial from the definition of $P$ that $B_n=P\sqcup\{1\}\sqcup P^{-1}$, and it is not difficult to show that $P$ is a semigroup. Once Denhornoy showed that every nontrivial braid belongs to either $P$ or $P^{-1}$ (with Dehornoy's definition), is is clear that a positive element in Dehornoy's ordering is also positive in the sense given by the above theorem, so both orderings coincide.

We point out that there is a recent work by Bacardit and Dicks~\cite{Bacardit-Dicks}, giving an alternative proof of Dehornoy's result, using only algebraic arguments.

Once we know that $B_n$ is left-orderable, it follows immediately that $B_n$ is torsion-free, by a classical result.

\begin{theorem}
A left-orderable group is torsion-free.
\end{theorem}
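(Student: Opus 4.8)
The plan is to argue by contradiction, using the positive cone $P=\{a\in G;\ 1<a\}$ introduced above together with the two facts recalled there: that $P$ is a sub-semigroup of $G$, and that $G=P\sqcup\{1\}\sqcup P^{-1}$. Suppose that $a\in G$ has finite order, say $a^m=1$ for some $m\geq 1$, and suppose towards a contradiction that $a\neq 1$.

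Since $G=P\sqcup\{1\}\sqcup P^{-1}$ and $a\neq 1$, either $a\in P$ or $a^{-1}\in P$. In the first case, because $P$ is closed under the group operation, an immediate induction on $m$ gives $a^m\in P$; but $a^m=1\notin P$, a contradiction. In the second case the same argument applied to $a^{-1}\in P$ yields $(a^{-1})^m=(a^m)^{-1}=1\in P$, again a contradiction. Hence $a=1$, and $G$ is torsion-free. Equivalently, and without naming $P$ explicitly, one may simply iterate left-invariance of the order: from $1<a$ one obtains by repeated left multiplication by $a$ the strictly increasing chain $1<a<a^2<\cdots<a^m$, forcing $a^m\neq 1$; from $a<1$ one obtains the strictly decreasing chain $1>a>a^2>\cdots>a^m$, again forcing $a^m\neq 1$.

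There is essentially no obstacle here. The only points deserving a moment's attention are that one must treat both alternatives $1<a$ and $a<1$ (in particular, left-invariance alone suffices; two-sided invariance is not needed), and that strict inequalities compose via transitivity, so the displayed chains genuinely cannot close up. Combined with the theorem of Dehornoy quoted above, this completes the third proof that $B_n$ is torsion-free.
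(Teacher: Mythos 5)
Your proof is correct and, in its second formulation (iterating left-invariance to get the strictly increasing chain $1<a<a^2<\cdots$), is exactly the argument the paper gives; the positive-cone version via $P\sqcup\{1\}\sqcup P^{-1}$ is just a trivial repackaging of the same idea. Nothing is missing.
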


\begin{proof}
Suppose that $G$ is left-orderable, and let $\beta\in G$ with $\beta\neq 1$. Suppose $1<\beta$. Multiplying from the left by $\beta$ one has $\beta<\beta^2$. Multiplying again, one gets $\beta^2<\beta^3$. Iterating this process one obtains a chain $1<\beta<\beta^2<\beta^3<\cdots $  As $<$ is an order, it is not possible that $\beta^m=1$ for some $m$.  The case $\beta<1$ is analogous.
\end{proof}

\begin{corollary}
$B_n$ is torsion-free.
\end{corollary}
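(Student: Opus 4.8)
The plan is to deduce this corollary immediately from the two theorems that precede it in the excerpt. First I would invoke Dehornoy's theorem, together with the geometric interpretation of~\cite{FGRRW}, to assert that $B_n$ is left-orderable for every $n\geq 1$: concretely, the set $P$ of nontrivial braids $\beta$ such that, in a tight representative of $\beta(E)$, the first non-horizontal segment goes into the lower half-disc, is a sub-semigroup of $B_n$ with $B_n=P\sqcup\{1\}\sqcup P^{-1}$, and declaring $a<b$ iff $a^{-1}b\in P$ gives a total order invariant under left multiplication. Then I would apply the preceding theorem (a left-orderable group is torsion-free): if $\beta\in B_n$ with $\beta\neq 1$ and, say, $1<\beta$, then left-multiplying repeatedly by $\beta$ produces the strictly increasing chain $1<\beta<\beta^2<\beta^3<\cdots$, so $\beta^m=1$ is impossible for any $m\geq 1$; the case $\beta<1$ is symmetric. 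Hence $B_n$ contains no nontrivial element of finite order.

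There is essentially no obstacle in this deduction itself: both ingredients are already in hand, and the argument is a single line once left-orderability is granted. The one point genuinely requiring care — but it is internal to the cited results, not to this step — is the well-definedness of $P$, i.e.\ that ``the first non-horizontal segment of $\beta(E)$'' is independent of the chosen tight representative of $\beta(E)$; this is the delicate core of~\cite{FGRRW} and of Dehornoy's original proof, which here I am entitled to use as a black box.

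I would also remark, for completeness, that the same conclusion follows from either of the two earlier proofs in this section: from the fact that $M_n$ and $N_n$ are $K(\pi,1)$ spaces (fundamental groups of aspherical spaces are torsion-free), or from the Nielsen realization argument showing every periodic braid is conjugate to a power of $\delta$ or $\epsilon$, neither of which has finite order. But since the corollary is stated right after the orderability results, the order-theoretic argument above is the one I would record as its proof.
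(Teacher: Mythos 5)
Your deduction is exactly the paper's: the corollary is stated immediately after Dehornoy's left-orderability theorem and the theorem that left-orderable groups are torsion-free, and it follows by simply combining the two, as you do. The argument is correct and requires no further comment.
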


We just point out, to end this section, that the pure braid group $P_n$ is bi-orderable. This means that there is a total order of its elements which is invariant by left and also right multiplication. Some good references about orderability and braid groups are~\cite{DDRW1,DDRW2}.

\section{Presentations of the braid groups}

One of the best known features of the braid groups is the finite presentation discovered by Artin in~\cite{Artin}. We have already mentioned the generators $\sigma_1,\ldots,\sigma_{n-1}\in B_n$. The complete presentation is as follows:
\begin{equation}\label{E:presentation}
B_n=\left\langle \sigma_1,\ldots,\sigma_{n-1} \left|\begin{array}{cl} \sigma_i\sigma_j=\sigma_j\sigma_i, &  |i-j|>1 \\
       \sigma_i\sigma_j\sigma_i=\sigma_j\sigma_i\sigma_j, & |i-j|=1\end{array}  \right.  \right\rangle
\end{equation}

The proof by Artin of the completeness of this presentation is more an indication than a rigorous proof. There have been other proofs by Magnus~\cite{Magnus_1934}, Bohnenblust~\cite{Bohnenblust}, Chow~\cite{Chow}, Fadell and Van Buskirk~\cite{Fadell-VanBuskirk}, or Fox and Neuwirth~\cite{Fox-Neuwirth}. See also~\cite{Birman}. Most of these proofs use the short exact sequences (\ref{E:exact_sequence_PBS}) and (\ref{E:exact_sequence_FPP}).

Basically, one can use the fact that $PB_2\simeq \mathbb Z$, and use Reidemeister-Schreier method applied to (\ref{E:exact_sequence_FPP}) to construct a presentation of $PB_n$, by induction on $n$. Then one can use (\ref{E:exact_sequence_PBS}) to deduce that the presentation (\ref{E:presentation}) is correct.

These proofs are quite technical, involving lots of calculations and sometimes containing small mistakes. We will give here a couple of proofs that, in our opinion, are not so technical, and hopefully will make the reader believe in the correctness of the presentation.

\subsection{First proof: Braid combing}

The first proof is based on an argument by Zariski~\cite{Zariski}. Although Zariski applied this argument to the braid group of the sphere, it can easily be applied to classical braids.

\begin{proposition}
The presentation {\rm (\ref{E:presentation})} is correct.
\end{proposition}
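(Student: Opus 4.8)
The plan is to show that the group $\mathcal{G}_n$ \emph{abstractly} presented by~(\ref{E:presentation}) is isomorphic to $B_n$. Since the braid relations hold among the Artin generators of $B_n$, there is a canonical surjection $\pi\colon\mathcal{G}_n\twoheadrightarrow B_n$ with $\sigma_i\mapsto\sigma_i$, and everything reduces to proving that $\pi$ is injective. Following Zariski, the idea is to exhibit a \emph{normal form} for the elements of $\mathcal{G}_n$ that can be reached using only the relations of~(\ref{E:presentation}), and which $\pi$ carries onto a normal form for $B_n$ that we independently know to be unique. Then $\pi$ sends the normal-form expression of an element of $\mathcal{G}_n$ to the normal-form expression of its image, so two elements with the same image have the same normal form and therefore coincide; this is precisely the injectivity we want.

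The normal form for $B_n$ comes from \emph{combing}. The homomorphism $\sigma_i\mapsto(i\ \ i+1)$ gives the exact sequence~(\ref{E:exact_sequence_PBS}); together with a fixed set-theoretic section $\Sigma_n\to B_n$ by ``permutation braids'' it writes every braid uniquely as a permutation braid times a pure braid. Iterating the \emph{split} exact sequence~(\ref{E:exact_sequence_FPP}), with base case $PB_2\simeq\mathbb{Z}$, exhibits $PB_n$ as an iterated semidirect product of free groups
\[
PB_n\;\simeq\;F_{n-1}\rtimes\bigl(F_{n-2}\rtimes\cdots\rtimes(F_2\rtimes F_1)\bigr),
\]
where $F_{j-1}=\ker(PB_j\to PB_{j-1})$ is free on generators $A_{1j},\dots,A_{j-1,j}$ (the pure braids in which only strand $j$ moves; for $j=n$ these are the elements $\iota(x_i)$ displayed above). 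Because each fibre is free and each sequence splits, every pure braid has a \emph{unique} combed expression $w_n w_{n-1}\cdots w_2$ with $w_j$ a word in $A_{1j},\dots,A_{j-1,j}$ alone; equivalently $B_n$ is in explicit set-theoretic bijection with $\Sigma_n\times F_{n-1}\times\cdots\times F_1$.

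It then remains to check that an arbitrary word in $\sigma_1^{\pm1},\dots,\sigma_{n-1}^{\pm1}$ can be brought, using only the relations~(\ref{E:presentation}), into this shape ``(permutation braid)$\,\cdot\,w_n w_{n-1}\cdots w_2$''. I would do this by induction on $n$: the inductive step is the combing-out of the last strand --- the word-level counterpart of the splitting of~(\ref{E:exact_sequence_FPP}) --- which collects every occurrence of $\sigma_{n-1}$ into a factor $w_n\in\langle A_{1n},\dots,A_{n-1,n}\rangle$, times a permutation-braid correction, times a word on $\sigma_1,\dots,\sigma_{n-2}$, to which the induction hypothesis applies; the accumulated permutation-braid factors are then merged into one, again using braid relations. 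Applying $\pi$ to the final expression yields precisely the unique combed normal form of its image in $B_n$, so $\pi$ is injective.

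The step I expect to be the main obstacle is exactly this last reduction --- proving that the braid relations \emph{suffice} to comb. In particular one must check that~(\ref{E:presentation}) already forces, among the chosen lifts of the $A_{ij}$, the classical pure braid relations describing how one generator conjugates the generators of a lower-index fibre (without them one cannot isolate the factors $w_j$), and that nothing beyond these survives. This is elementary word rewriting rather than the Reidemeister--Schreier computation of the classical proofs, which is why it is comparatively clean; the structural reason it works is that the splitting of~(\ref{E:exact_sequence_FPP}) leaves the free fibre factors entirely unconstrained, so the only relations in play are the combing ones.
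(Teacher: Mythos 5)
Your proposal is correct and follows essentially the same route as the paper: a Zariski-style combing of the last strand carried out at the level of words, using the splitting of the exact sequence~(\ref{E:exact_sequence_FPP}), the freeness of the kernel, and induction on $n$ (the paper streamlines this by working only with a word representing the trivial braid, so the permutation-braid factor never appears). The one step you defer as ``the main obstacle'' --- checking that the relations of~(\ref{E:presentation}) alone yield rewritings of $\sigma_i^{\pm 1} x_j \sigma_i^{\mp 1}$ as words in the $x_k$, so that the free factor can be isolated --- is precisely the explicit computation that occupies the bulk of the paper's proof, and it does go through.
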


\begin{proof}
Let $W=\sigma_{i_1}^{e_1}\cdots \sigma_{i_m}^{e_m}$ be a word in $\sigma_1,\ldots,\sigma_{n-1}$ and their inverses, and suppose that the braid determined by $W$ is trivial. We must show that one can obtain the trivial word starting with $W$, and applying only the relations in presentation (\ref{E:presentation}), together with insertions or deletions of subwords of the form $(\sigma_i^{\pm 1} \sigma_i^{\mp 1})$.

For every $k=0,\ldots, m$, let $j_k$ be the position of the $n$-th puncture at the end of the motion represented by $\sigma_{i_1}^{e_1}\cdots \sigma_{i_k}^{e_k}$. As $W$ represents the trivial braid, it is clear that $j_0=j_m=n$. Now denote $\alpha_i=\sigma_i \sigma_{i+1}\cdots \sigma_{n-1}$, for $i=1,\ldots,n-1$, and also denote $\alpha_n=1$. Then $\alpha_i$ represents a braid that sends the $i$-th puncture to the $n$-th position.

It is clear that, using only permitted insertions, we can transform our word $W$ into:
$$
   W\equiv (\alpha_{j_0}^{-1} \sigma_{i_1}^{e_1} \alpha_{j_1})(\alpha_{j_1}^{-1} \sigma_{i_2}^{e_2} \alpha_{j_2})\cdots (\alpha_{j_{m-1}}^{-1} \sigma_{i_m}^{e_m} \alpha_{j_m}).
$$
This holds as $\alpha_{j_0}=\alpha_{j_m}=1$. Now each parenthesized factor has one of the following forms:
\begin{enumerate}

 \item $(\sigma_{n-1}^{-1}\cdots \sigma_{i}^{-1}) \sigma_i (\sigma_{i+1}\cdots \sigma_{n-1})$. This is clearly equivalent to the trivial word, and can be removed.

 \item $(\sigma_{n-1}^{-1}\cdots \sigma_{i}^{-1}) \sigma_i^{-1} (\sigma_{i+1}\cdots \sigma_{n-1})$. We will denote this word $x_i^{-1}$.

 \item $(\sigma_{n-1}^{-1}\cdots \sigma_{i}^{-1}) \sigma_{i-1} (\sigma_{i-1}\cdots \sigma_{n-1})$. We will denote this word $x_{i-1}$.

 \item $(\sigma_{n-1}^{-1}\cdots \sigma_{i}^{-1}) \sigma_{i-1}^{-1} (\sigma_{i-1}\cdots \sigma_{n-1})$. This is equivalent to the trivial word, and can be removed.

 \item $(\sigma_{n-1}^{-1}\cdots \sigma_{i}^{-1}) \sigma_k^{\pm 1} (\sigma_{i}\cdots \sigma_{n-1})$ with $k<i-1$. In this case, $\sigma_k^{\pm 1}$ commutes with the other letters, so we can use permitted relations to replace this word by the letter $\sigma_k^{\pm 1}$.

 \item $(\sigma_{n-1}^{-1}\cdots \sigma_{i}^{-1}) \sigma_k^{\pm 1} (\sigma_{i}\cdots \sigma_{n-1})$  with  $k>i$.  It is very easy to see that, if $k>i$, using the braid relations one has
 \begin{equation}\label{E:basic_relation}
        \sigma_k (\sigma_i\cdots \sigma_{n-1}) \equiv (\sigma_i \cdots \sigma_{n-1})\sigma_{k-1}.
 \end{equation}
    Therefore the above word is equivalent to $\sigma_{k-1}^{\pm 1}$.

\end{enumerate}

Notice that, by the above procedure, we have replaced our original word $W$ by a word in $\sigma_1,\cdots, \sigma_{n-2},x_1,\cdots,x_{n-1}$ and their inverses. It is important that $\sigma_{n-1}$ and $\sigma_{n-1}^{-1}$ never appear in this writing alone, but always as parts of some word $x_i^{\pm 1}$.

Now, for $i=1,\ldots,n-2$ and $j=1,\ldots,n-1$, the word $\sigma_i^{-1} x_j \sigma_i$ can be written as a product of $x_1,\ldots,x_{n-1}$ and their inverses, by using only permitted relations. Indeed, if $i<j-1$ one can slide $\sigma_i$ to the left and the resulting word is $x_j$. If $i=j-1$, that is $j=i+1$, one has the following:
\begin{eqnarray*}
   \sigma_i^{-1} x_{i+1} \sigma_i & \equiv & \fbox{$\sigma_i^{-1} (\sigma_{n-1}^{-1}\cdots \sigma_{i+2}^{-1})$} \sigma_{i+1}^2 \fbox{$(\sigma_{i+2}\cdots \sigma_{n-1})\sigma_i$}
\\ & \equiv &  (\sigma_{n-1}^{-1}\cdots \sigma_{i+2}^{-1}) \sigma_i^{-1} \sigma_{i+1}\quad \sigma_{i+1} \sigma_{i} (\sigma_{i+2}\cdots \sigma_{n-1})
\\ & \equiv &  (\sigma_{n-1}^{-1}\cdots \sigma_{i+2}^{-1})\fbox{$ \sigma_i^{-1} \sigma_{i+1}\sigma_i$}\: \fbox{$\sigma_i^{-1} \sigma_{i+1}\sigma_{i}$} (\sigma_{i+2}\cdots \sigma_{n-1})
\\ & \equiv &  (\sigma_{n-1}^{-1}\cdots \sigma_{i+2}^{-1})\sigma_{i+1} \sigma_{i}\fbox{$\sigma_{i+1}^{-1} \sigma_{i+1}$} \sigma_{i}\sigma_{i+1}^{-1} (\sigma_{i+2}\cdots \sigma_{n-1})\\
& \equiv & x_{i+1}\:x_i \:x_{i+1}^{-1},
\end{eqnarray*}
where the last equality is obtained just by permitted insertions.

If $i=j$ one has
\begin{eqnarray*}
   \sigma_i^{-1} x_i \sigma_i & \equiv & \sigma_i^{-1} (\sigma_{n-1}^{-1}\cdots \sigma_{i+2}^{-1}\sigma_{i+1}^{-1}) \sigma_i^2 (\sigma_{i+1}\fbox{$\sigma_{i+2}\cdots \sigma_{n-1})\sigma_i$}
\\ & \equiv & \sigma_i^{-1} (\sigma_{n-1}^{-1}\cdots \sigma_{i+2}^{-1}\sigma_{i+1}^{-1}) \fbox{$\sigma_i^2 \sigma_{i+1}\sigma_i$}(\sigma_{i+2}\cdots \sigma_{n-1})
\\
& \equiv &\sigma_i^{-1} (\sigma_{n-1}^{-1}\cdots \sigma_{i+2}^{-1} \fbox{$\sigma_{i+1}^{-1}) \sigma_{i+1}$} \sigma_{i}\sigma_{i+1}^2(\sigma_{i+2}\cdots \sigma_{n-1})
\\
& \equiv &\fbox{$\sigma_i^{-1} (\sigma_{n-1}^{-1}\cdots \sigma_{i+2}^{-1}) \sigma_{i}$}\sigma_{i+1}^2(\sigma_{i+2}\cdots \sigma_{n-1})
\\
& \equiv & (\sigma_{n-1}^{-1}\cdots \sigma_{i+2}^{-1}) \sigma_{i+1}^2(\sigma_{i+2}\cdots \sigma_{n-1})
\\
& \equiv & x_{i+1}.
\end{eqnarray*}

Finally, if $i>j$, one has $\sigma_i x_j \equiv x_j \sigma_i$, as one can see by sliding $\sigma_i$ to the right, using the obvious relation at each time. Hence if $i>j$ one has $\sigma_i^{-1} x_j \sigma_i \equiv x_j$.

It is clear that the above equations also imply that $\sigma_i x_j \sigma_i^{-1}$ can be written as a word in $x_1,\ldots,x_{n-1}$ and their inverses. The resulting word is $x_j$ if either $i<j-2$ or $i>j$, it is  $x_{i}$ if $i=j-1$, and it is $x_i^{-1}x_{i+1}x_i$ if $i=j$.

Therefore, starting with the word $W$, once we have rewritten it as a word in $\sigma_1,\ldots,\sigma_{n-2}, x_1,\ldots,x_{n-1}$ and their inverses, we can collect all the $\sigma_{i}^{\pm 1}$ on the right, so that we can write:
$$
    W \equiv W_1\; W_2,
$$
where $W_1$ is a word in $x_1,\ldots,x_{n-1}$ and their inverses, and $W_2$ is a word in $\sigma_{1}\cdots \sigma_{n-2}$ and their inverses.

Finally, we just need to notice that by the split exact sequence (\ref{E:exact_sequence_FPP}), one has $PB_n=F_{n-1}\rtimes PB_{n-1}$, so every pure braid can be decomposed in a unique way as a product of a braid in $\iota(F_{n-1})$ and a braid in $PB_{n-1}$ (with the usual inclusion of $PB_{n-1}$ into $PB_n$). We remark that $\iota(F_{n-1})$ is the free subgroup of $PB_n$ freely generated by $x_1,\ldots,x_{n-1}$. Hence, as $W$ is pure ($W$ represents the trivial braid), the decomposition $W_1W_2$ is unique, meaning that $W_1$ represents the trivial element in $F_{n-1}$ and $W_2$ represents the trivial element in $PB_{n-1}$. As $x_1,\ldots,x_n$ is a free set of generators of $F_{n-1}$, it follows that $W_1$ can be reduced to the trivial word by a sequence of permitted deletions. Therefore $W\equiv W_2$, which is a word in $\sigma_1,\ldots,\sigma_{n-2}$ and their inverses representing the trivial braid in $B_{n-1}$. The result then follows by induction on $n$.
\end{proof}

\subsection{Second proof: Fundamental groups of cell complexes}

The above proof, although elementary, still involves some technical calculations, and does not help to see why presentation (\ref{E:presentation}) is a natural one. We present now the beautiful proof by Fox and Neuwirth~\cite{Fox-Neuwirth}, in which braid groups are seen as fundamental groups of cell complexes.

It is well known that given a regular cell complex $\mathcal C$ of dimension $m$, and a subcomplex $\mathcal C'$ of dimension $m-2$, one can compute a presentation of the fundamental group $\pi_1(\mathcal C\backslash \mathcal C')$ in the following way:
\begin{enumerate}

\item Consider the dual graph $\Gamma$ in $\mathcal C\backslash \mathcal C'$, that is, a graph having a vertex for each $m$-cell of $\mathcal C\backslash \mathcal C'$, and an edge connecting two vertices of $\Gamma$ for every $(m-1)$-cell adjacent to the corresponding $m$-cells.

\item Choose a maximal tree $T$ in $\Gamma$.

\item There is a generator of $\pi_1(\mathcal C\backslash \mathcal C')$ for every edge $e\in \Gamma\backslash T$ (corresponding to a $(m-1)$-cell).

\item There is a relation for each $(m-2)$-cell in $\mathcal C\backslash \mathcal C'$.

\end{enumerate}

The generators mentioned in step 3 can be constructed as follows: Fix as a base point a vertex $v_0$ of $\Gamma$. Given an edge $e\in \Gamma\backslash T$, its corresponding generator is a loop that goes, along $T$, from $v_0$ to the one of the  endpoints of $e$, then moves along $e$, and finally goes back to $v_0$ along $T$.

The relations in step 4 are defined as follows: Given a $(m-2)$-cell $c$, we can consider the $(m-1)$-cells adjacent to $c$, which are positioned in a well defined cyclic order (cutting these cells by a transverse plane, the situation looks like a vertex with adjacent edges). The collection of these $(m-1)$-cells, determines a loop in $\Gamma$ (up to orientation). At least one of the edges in this loop does not belong to $T$, as $T$ is a tree, and in this case it corresponds to a generator. Reading these edges (generators) in the corresponding order, with the corresponding orientation, provides the relation in $\pi_1(\mathcal C\backslash \mathcal C')$.

We know that the braid group $B_n$ is the fundamental group of the configuration space $N_n$. The brilliant idea by Fox and Neuwirth was to construct cell complexes $\mathcal C$ and $\mathcal C'$ such that $\mathcal C\backslash \mathcal C'=N_n$, in such a way that the above procedure yields the well known presentation of $B_n$ given in (\ref{E:presentation}). Furthermore, the cell comlexes they define are quite easy to understand.

Consider $\mathbb C^n/\Sigma_n$, where the symmetric group $\Sigma_n$ acts on $\mathbb C_n$ by permuting coordinates. The space $\mathbb C^n/\Sigma_n$ has real dimension $2n$.

Notice that if we consider a complex number as a pair of real numbers, then $\mathbb C$ can be totally ordered, using the lexicographical order. More precisely, given $z_1=a_1+b_1i$ and $z_2=a_2+b_2i$, we say that $z_1\leq_{lex}z_2$ if and only if either $a_1<a_2$ (we will say that $z_1<z_2$), or $a_1=a_2$ and $b_1< b_2$ (we will say that $z_1\veebar z_2$), or $z_1=z_2$.   Geometrically, $z_1 <z_2$ means that $z_1$ is {\it to the left} of $z_2$, and $z_1\veebar z_2$ means that $z_1$ is {\it below} $z_2$.

Notice that every point in $\mathbb C^n/\Sigma_n$ is a family of $n$ undistinguishable (not necessarily distinct) points $\{z_1,\ldots,z_n\}\subset \mathbb C$. These points can be ordered lexicographically as above, so we can write $z_1 \square z_2\square \cdots \square z_n$, where $\square$ can be either $<$ or $\veebar$ or $=$.

We can now define a symbol $\theta =(z_1\square z_2 \square \cdots \square z_n)$ in the above way. Each of these symbols defines a cell:
$$
     C_{\theta}=\{\{z_1,\ldots,z_n\}\in \mathbb C^n/\Sigma_n\mbox{ such that } \theta\}.
$$
For instance, if $\theta=(z_1\veebar z_2 <z_3)$, then $C_{\theta}$ corresponds to the configurations of three points in $\mathbb C$ such that two of them are one below the other, and the third one is more to the right.

It is not hard to see that the sets $C_{\theta}$ determine a regular cell decomposition $\mathcal C$ of $\mathbb C^m/\Sigma_n$. Moreover, those $C_{\theta}$ for which $\theta$ involves at least one equality, determine a regular cell decomposition of the big diagonal $\mathcal D/\Sigma_n$. It is clearly a subcomplex of the above one, and moreover $(\mathbb C^n/\Sigma_n)\backslash (\mathcal D/\Sigma_n) = N_n$.  Therefore, we have $N_n$ considered as the complement of a subcomplex of a regular cell complex. The above procedure will then yield a presentation of $\pi_1(N_n)$, that is, a presentation of $B_n$.

Clearly, the real dimension of a cell $C_{\theta}$ is
$$
   \dim_{\mathbb R}(C_{\theta})= 2n - (\mbox{no. of } \veebar) - 2(\mbox{no. of }=).
$$
Hence $\mathcal C$ has dimension $2n$.  Notice that there is only one $(2n)$-cell, namely $C_{\theta}$ for $\theta=(z_1<z_2<\cdots<z_n)$. Hence $\Gamma$ has only one vertex, $v_0$, and then $T=\{v_0\}$. We can consider $v_0=\{1,\ldots,n\}\subset \mathbb C$, the usual base points for braids.

Let us compute the generators. As we saw above, they correspond to the $(2n-1)$-cells, that is, to $C_{\theta_i}$ with $\theta_i=(z_1<\cdots <z_i \veebar z_{i+1}<\cdots <z_n)$, for $i=1,\ldots,n-1$. We are then going to obtain $n-1$ generators. To construct the $i$-th generator, we start at $v_0$,  we must cross the cell $C_{\theta_i}$ and go back to $v_0$ again. This corresponds to moving the points $i$ and $i+1$ so that they swap positions, the $i$-th one passing below the $(i+1)$-st. That is, this corresponds to the motion $\sigma_i$ (see Figure~\ref{Figure_7}). Therefore, the generators we obtain are precisely $\sigma_1,\ldots,\sigma_{n-1}$.

\begin{figure}[ht]
\centerline{\includegraphics{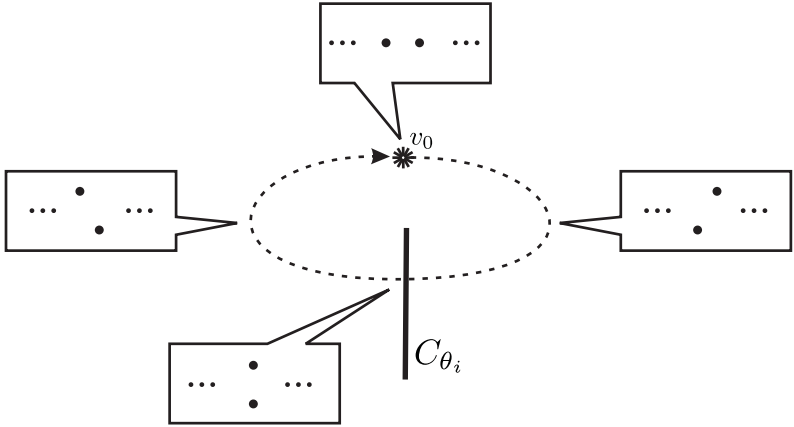}} \caption{This represents a loop based at the point $v_0=(1,\ldots,n)$, moving along the $2n$-cell $C_\theta$ and crossing the $(2n-1)$-cell $C_{\theta_i}$. The motion of the punctures corresponds to the generator $\sigma_i$.}
\label{Figure_7}
\end{figure}

The relations are given by the cells of dimension $2n-2$. we will distinguish two cases: Those corresponding to
$$
 \theta_{i,j}=(z_1<\cdots <z_i\veebar z_{i+1} <\cdots < z_j \veebar z_{j+1} <\cdots < z_n)
$$ with $i+1<j$, and those corresponding to
$$
 \theta_{i,i+1} = (z_1<\cdots <z_i\veebar z_{i+1} \veebar z_{i+2} <\cdots < z_n).
$$

In the first case, $C_{\theta_{i,j}}$ corresponds to a configuration in which there are two pairs of points, $z_i$ below $z_{i+1}$, and also $z_j$ below $z_{j+1}$, while the others lay in distinct vertical lines. This $(2n-2)-cell$ is adjacent to two $(2n-1)$-cells, namely $C_{\theta_i}$ and $C_{\theta_j}$, in the way shown in Figure~\ref{Figure_8}. In order to do a loop around $C_{\theta_{i,j}}$, one must cross each adjacent cell twice, once in each sense, so the corresponding relation reads $\sigma_i \sigma_{j}\sigma_i^{-1}\sigma_j^{-1}=1$, or equivalently:
$
 \sigma_i\sigma_j=\sigma_j\sigma_i.
$

\begin{figure}[ht]
\centerline{\includegraphics{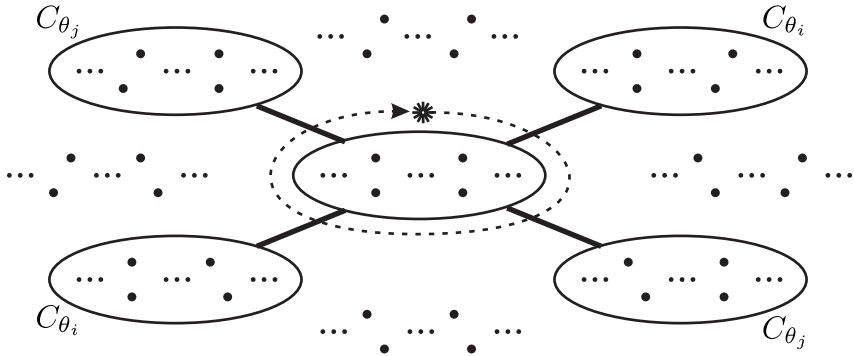}} \caption{This represents a loop around the $(2n-2)$-cell $C_{\theta_{i,j}}$. It crosses $C_{\theta_i}$,  $C_{\theta_j}$,  then $C_{\theta_i}$ in the opposite sense, and finally $C_{\theta_j}$ in the opposite sense. Looking at the movement of the punctures involved, we see that this corresponds to the relation $\sigma_i\sigma_j\sigma_i^{-1}\sigma_j^{-1}=1$. }
\label{Figure_8}
\end{figure}

The second case corresponds to a $(2n-2)$-cell, $C_{\theta_{i,i+1}}$ in which there are three points in the same vertical line, $z_i$ is below $z_{i+1}$, which is below $z_{i+2}$. This cell is also adjacent to two $(2n-1)$-cells, depending whether one of the three points {\it escapes} from the vertical line to the left, or to the right. That is, $C_{\theta_{i,i+1}}$ is adjacent to $C_{\theta_i}$ and $C_{\theta_{i+1}}$. This time the adjacency is as shown in Figure~\ref{Figure_9}, so in order to do a loop around $C_{\theta_{i,i+1}}$, one crosses each $(2n-1)$-cell three times, with the corresponding orientations, yielding a relation: $\sigma_i \sigma_{i+1}\sigma_i \sigma_{i+1}^{-1} \sigma_i^{-1} \sigma_{i+1}^{-1}=1$, or in other words:
$$
   \sigma_i\sigma_{i+1}\sigma_i = \sigma_{i+1}\sigma_i \sigma_{i+1}.
$$
Therefore, this argument by Fox and Neuwirth, shows not only the correctness of (\ref{E:presentation}), but also why it is a natural presentation of $B_n$.

\begin{figure}[ht]
\centerline{\includegraphics{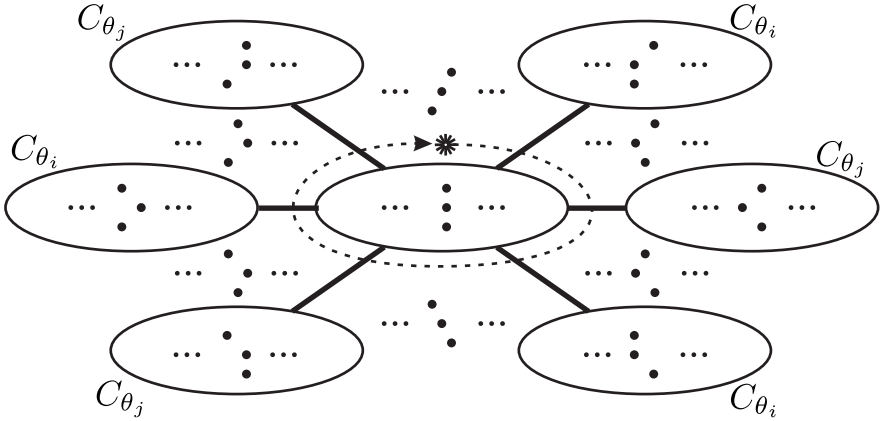}} \caption{This represents a loop around the $(2n-2)$-cell $C_{\theta_{i,i+1}}$. It crosses the cell $C_{\theta_i}$, then $C_{\theta_{i+1}}$, then $C_{\theta_i}$ again, and then it crosses, in the opposite sense, $C_{\theta_{i+1}}$, $C_{\theta_i}$ and $C_{\theta_{i+1}}$. Looking at the movement of the punctures involved, we see that this corresponds to the relation $\sigma_i\sigma_{i+1}\sigma_i\sigma_{i+1}^{-1}\sigma_{i}^{-1}\sigma_{i+1}^{-1}=1$. }
\label{Figure_9}
\end{figure}

\section{Garside structure}

Let us now present one of the most fruitful algebraic features of braid groups. The story begins with the work of Garside~\cite{Garside}, who discovered some properties of braid groups which allowed him to solve the word problem (in a new way) and the conjugacy problem (for the first time) in braid groups. He also proved in a new, simple way that $\langle \Delta^2\rangle$ is the center of $B_n$.

Garside's results were generalized to all Artin-Tits groups of spherical type by Brieskorn and Saito~\cite{Brieskorn-Saito}. Later, Dehornoy and Paris~\cite{Dehornoy-Paris} introduced {\it Garside groups} as, basically, the class of groups satisfying the algebraic properties discovered by Garside. Garside groups include, of course, all Artin-Tits groups of spherical type, in particular braid groups. This implies that the properties of braid groups that one is able to show using the techniques introduced by Garside (and developed by several other authors), will hold in every Garside group. Some authors like David Bessis~\cite{Bessis}, Fran\c{c}ois Digne and Jean Michel~\cite{Digne-Michel}, Daan Krammer~\cite{Krammer3} and Patrick Dehornoy~\cite{Dehornoy_2009}, are extending the scope by introducing {\it Garside categories}, for which Garside groups are a particular case.

The main ideas in Garside's work are the following. First notice that the relations in  presentation (\ref{E:presentation}) involve only positive powers of the generators, hence one can consider the {\it monoid} $B_n^+$ determined by that presentation. Elements of $B_n^+$ are words in $\sigma_1,\ldots,\sigma_{n-1}$ (but not their inverses), and two such words are equivalent if and only if one can obtain one from the other by iteratively replacing subwords of the form $\sigma_i\sigma_j$ ($|i-j|>1$), respectively $\sigma_i\sigma_j\sigma_i$ ($|i-j|=1$), by $\sigma_j\sigma_i$, respectively $\sigma_j\sigma_i\sigma_j$.

In the monoid $B_n^+$, there is a natural partial order. Namely, given $a,b\in B_n^+$ we say that $a\preccurlyeq b$ if $ac=b$ for some $c\in B_n^+$. We say that $a$ is a {\it prefix} of $b$. Garside does not mention this order in~\cite{Garside}, but it will be clearer to explain his results in this way. Notice that $\preccurlyeq$ is a partial order which is invariant under left multiplication, that is, $a\preccurlyeq b$ implies $xa\preccurlyeq xb$ for every $a,b,x\in B_n^+$.

Given such a partial order, one may wonder whether there exist unique greatest common divisors and least common multiples with respect to $\preccurlyeq$. That is, given $a,b\in B_n^+$, does it exist a unique $d\in B_n^+$ such that $d\preccurlyeq a$, $d\preccurlyeq b$ and $d'\preccurlyeq d$ for every $d'$ being a common prefix of $a$ and $b$? And does there exist a unique $m\in B_n^+$ such that $a\preccurlyeq m$, $b\preccurlyeq m$ and $m\preccurlyeq m'$ for every $m'$ having $a$ and $b$ as prefixes?  In such cases we will write $d=a\wedge b$ and $m=a\vee b$. Notice that we will also have $xd=xa\wedge xb$ and $xm=xa\vee xb$ for every $x\in B_n^+$.

Notice that one can also define a {\it suffix} order $\succcurlyeq$, which is invariant under right-multiplication.

The key point in Garside work is to show, by elementary arguments, that $\sigma_i$ and $\sigma_j$ do have least common multiples in $B_n^+$. Namely
$$
    \sigma_i\vee \sigma_j = \left\{\begin{array}{ll}
    \sigma_i\sigma_j & \mbox{ if } |i-j|>1, \\
   \sigma_i\sigma_j\sigma_i  & \mbox{ if } |i-j|=1.
    \end{array}\right.
$$
He shows, at the same time, that $B_n^+$ is cancellative, that is, $xay=xby$ implies $a=b$ for every $a,b,x,y\in B_n^+$.

As the relations in presentation (\ref{E:presentation}) are homogenous, equivalent words in $B_n^+$ have the same length, so there is a well defined length in $B_n^+$. Although Garside does not mention it explicitly, induction on this length, together with the cancellativity condition, allows to show from the above result that every two elements in $B_n^+$ admit unique least common multiples and greatest common divisors.

Garside then studies the special element
$$
 \Delta=\sigma_1(\sigma_2\sigma_1)\cdots (\sigma_{n-1}\sigma_{n-2}\cdots \sigma_1).
$$
Again using elementary arguments, he shows that $\Delta=\sigma_1\vee \sigma_2\vee \cdots \vee \sigma_{n-1}$ in $B_n^+$, and that $\sigma_i\Delta= \Delta \sigma_{n-i}$ for $i=1,\ldots,n-1$. This implies that $\sigma_1,\ldots,\sigma_{n-1}$ are also {\it suffixes} of $\Delta$, that $\Delta^2$ commutes with every element of $B_n^+$ and, by induction on the length, that for every $a\in B_n^+$ one has $a\preccurlyeq \Delta^m$ and $\Delta^m\succcurlyeq a$ for some $m\geq 0$.

This has important implications. As every two elements in $B_n^+$ have a common multiple (some power of $\Delta$), and $B_n^+$ is cancellative, Ore's condition~\cite{Ore} says that $B_n^+$ embeds in its group of fractions. This group of fractions, due to presentation (\ref{E:presentation}) is precisely $B_n$. Therefore, $B_n^+$ is not only an algebraically defined monoid, but it can be considered as the subset of $B_n$ formed by the braids which can be expressed as words involving positive powers of the generators. These are called {\it positive} braids, and $B_n^+$ is called the monoid of positive braids.

One remark: The partial order $\preccurlyeq$ should not be confused with Dehornoy's ordering (which is a total order). Nor should the monoid of positive braids $B_n^+$ be confused with the semigroup $P$ of positive elements in Dehornoy's setting.

The above properties imply that the partial order $\preccurlyeq$ (respectively $\succcurlyeq$) can be extended to $B_n$ in the following way: $a\preccurlyeq b$ (resp. $b\succcurlyeq a$) if and only if $ac=b$ (resp. $b=ca$)  for some $c\in B_n^+$. This gives a partial order which is invariant under left-multiplication (resp. right-multiplication), and which admits unique least common multiples and greatest common divisors.

This structure allows to show lots of good properties of braid groups, as we will now see.

\subsection{Solution to the word problem}

Garside gave a new solution to the word problem in braid groups in the following way. Recall that for every $i=1,\ldots,n-1$ one has $\sigma_i\preccurlyeq \Delta$, that is, $\Delta=\sigma_iX_i$ for some $X_i\in B_n^+$. Given a braid written as a word in $\sigma_1,\ldots,\sigma_{n-1}$ and their inverses, one can replace each appearance of $\sigma_i^{-1}$ by $X_i\Delta^{-1}$. Conjugating a positive braid with $\Delta^{-1}$ gives a positive braid, so we can move all appearances of $\Delta^{-1}$ to the left. This shows that every braid can be written as $\Delta^p A$, for some $p\in \mathbb Z$ and some $A\in B_n^+$.
Moreover, if $\Delta\preccurlyeq A$, we can replace $\Delta^p$ by $\Delta^{p+1}$ and $A$ by $\Delta^{-1}A$. This decreases the length of $A$, so it can only be made a finite number of times. Therefore, every braid can be written, {\it in a unique way}, as $\Delta^p A$, where $p\in \mathbb Z$, $A\in B_n^+$ and $\Delta\not\preccurlyeq A$.

This {\it normal form} allows to solve the word problem, since one can enumerate all positive words representing the positive braid $A$, by iteratively applying the braid relations in every possible way. This was the solution given by Garside. It is not quite satisfactory, since it gives a highly inefficient algorithm.

Elrifai and Morton~\cite{Elrifai-Morton} (see also the work by Thurston~\cite{Epstein}) improved this by defining the {\it left normal form} of a braid. One just needs to take the decomposition $\Delta^p A$ as above, and then define $a_1=A\wedge \Delta$ and, by recurrence, $a_i=(a_{i-1}^{-1}\cdots a_1^{-1}A)\wedge \Delta$. In this way, every braid is written in a unique way as:
$$
    \Delta^p a_1\cdots a_r,
$$
where $a_i$ is a positive proper prefix of $\Delta$, that is $1\prec a_i \prec \Delta$, and also $(a_ia_{i+1})\wedge \Delta =a_i$, for $i=1,\ldots,r$. This is called the left normal form of the braid. Positive prefixes of $\Delta$ are called {\it simple elements} or {\it permutation braids}. Hence the normal form of a braid is a unique decomposition as a product of a power of $\Delta$ and a sequence of proper simple elements. Thurston~\cite{Epstein} showed that this normal form can be computed in time $O(l^2n\log n)$ if the input braid in $B_n$ is given as a word with $l$ letters.

\subsection{Braid groups are torsion-free: Fourth proof}

The Garside structure of the braid group allows to give a very simple proof that $B_n$ is torsion-free. Notice that this holds for every Garside group, including Artin-Tits groups of spherical type. We learnt this proof from John Crisp.

\begin{proposition}
Braid groups are torsion-free.
\end{proposition}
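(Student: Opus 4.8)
The plan is to run everything through the integer invariants attached to the Garside normal form. For $\gamma\in B_n$ let $\inf(\gamma)$ be the largest $p\in\mathbb Z$ with $\Delta^{p}\preccurlyeq\gamma$, and $\sup(\gamma)$ the smallest $q$ with $\gamma\preccurlyeq\Delta^{q}$; these are well defined by the structure recalled above, and $\gamma$ is a positive braid exactly when $\inf(\gamma)\geq 0$. The one elementary fact I will lean on is super/sub-additivity, $\inf(\gamma\delta)\geq\inf(\gamma)+\inf(\delta)$ and $\sup(\gamma\delta)\leq\sup(\gamma)+\sup(\delta)$: writing $\gamma=\Delta^{p}A$ and $\delta=\Delta^{q}B$ with $A,B\in B_n^{+}$ and using $\sigma_i\Delta=\Delta\sigma_{n-i}$ (so conjugation by $\Delta$ carries $B_n^{+}$ to itself), one has $\gamma\delta=\Delta^{p+q}A'B$ with $A'B\in B_n^{+}$, whence $\inf(\gamma\delta)\geq p+q$; the inequality for $\sup$ is the mirror statement with suffixes.

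First I would reduce to a statement about positive braids. Let $\beta\in B_n$ with $\beta^{m}=1$. Since $\Delta^{2}$ is central, pick $k\geq 0$ large enough that $\gamma:=\Delta^{2k}\beta$ lies in $B_n^{+}$; moving the central factors to the left gives $\gamma^{m}=\Delta^{2km}\beta^{m}=\Delta^{2km}$. So it suffices to show that a positive braid $\gamma$ with $\gamma^{m}=\Delta^{2km}$ must equal $\Delta^{2k}$, since then $\beta=\Delta^{-2k}\gamma=1$. Triviality is a conjugacy invariant and $\gamma^{m}$ is central, so I may replace $\gamma$ by a conjugate in which $\inf$ is as large as possible and $\sup$ as small as possible (these two optima can be realized simultaneously — this is the super summit set of $\gamma$). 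Super-additivity then gives $2km=\inf(\gamma^{m})\geq m\inf(\gamma)$ and $2km=\sup(\gamma^{m})\leq m\sup(\gamma)$, that is, $\inf(\gamma)\leq 2k\leq\sup(\gamma)$. If these can be promoted to $\inf(\gamma)=\sup(\gamma)=2k$, then $\gamma$ has canonical length $\sup(\gamma)-\inf(\gamma)=0$, hence $\gamma=\Delta^{2k}$, and we are done.

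The main obstacle is exactly this last promotion. Super-additivity by itself is too weak: the inequality $\inf(\gamma^{m})\geq m\inf(\gamma)$ can be strict even on the super summit set — take $\gamma=\delta=\sigma_{1}\cdots\sigma_{n-1}$, which lies in its own super summit set with $\inf(\delta)=0$, yet $\delta^{n}=\Delta^{2}$. What forces equality in our case is that such a $\gamma$ is a \emph{periodic} braid (a suitable power is central), and periodic braids are rigid in a strong sense: each is conjugate to a power of $\delta$ or of $\epsilon=\sigma_{1}(\sigma_{1}\cdots\sigma_{n-1})$, and, since $\Delta^{2}=\delta^{n}=\epsilon^{\,n-1}$, comparing exponent sums via the homomorphism $B_n\to\mathbb Z$ sending each $\sigma_i$ to $1$ shows that the only ones whose $m$-th power is $\Delta^{2km}$ are the conjugates of $\Delta^{2k}$. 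So the crux of the proof — and the step I expect to cost the most — is to establish this rigidity of $\gamma$ inside the Garside framework itself (through cycling and decycling, and the analysis of periodic or rigid elements), rather than importing the topological arguments of the earlier proofs. A shorter but less self-contained alternative would be to invoke the finite-dimensional classifying space that the Garside normal form yields, torsion-freeness then being automatic since a group of finite cohomological dimension is torsion-free.
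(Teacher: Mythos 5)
Your reduction to the statement ``a positive braid $\gamma$ with $\gamma^{m}=\Delta^{2km}$ must equal $\Delta^{2k}$'' is fine, and your super-additivity estimates for $\inf$ and $\sup$ are correct. But the argument does not close: the whole weight rests on promoting $\inf(\gamma)\leq 2k\leq\sup(\gamma)$ to equalities, and you supply no proof of that step --- indeed your own example $\delta=\sigma_1\cdots\sigma_{n-1}$ shows the inequalities can be strict even on the super summit set. The fix you sketch, namely that every periodic braid is conjugate to a power of $\delta$ or of $\epsilon$, is a genuinely hard theorem: in these notes it is obtained from the Nielsen realization theorem together with the K\'er\'ekjart\'o--Eilenberg theorem (the second proof of torsion-freeness), and a purely Garside-theoretic proof of it is a substantial piece of work in its own right, not something that follows from cycling and decycling in a few lines. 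So as written the proposal either imports the topological machinery it set out to avoid, or leaves its central step unproved. (Your last-sentence alternative via a finite-dimensional classifying space is likewise only named, not carried out, and is essentially the first, topological proof in disguise.)

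The paper's Garside proof avoids all of this with one observation you did not use: the prefix order on $B_n$ (not just on $B_n^{+}$) admits greatest common divisors, and $\wedge$ is invariant under left multiplication. Given $x^{m}=1$, set $d=1\wedge x\wedge\cdots\wedge x^{m-1}$. Left-invariance gives $xd=x\wedge x^{2}\wedge\cdots\wedge x^{m}$, and since $x^{m}=1$ this is the gcd of the same set of elements, i.e.\ $xd=d$; cancelling $d$ yields $x=1$. No normal forms, no $\inf$/$\sup$, no super summit sets, and no classification of periodic elements are needed --- the cyclic symmetry of the set $\{1,x,\ldots,x^{m-1}\}$ under multiplication by $x$ does all the work. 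If you want to salvage your line of argument, this gcd trick is exactly the ingredient that replaces the missing rigidity statement.
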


\begin{proof}
Let $x\in B_n$ and suppose that $x^n=1$ for some $n>0$. Consider the element $d= 1\wedge x \wedge \cdots \wedge x^{n-1}$. Then $xd=x(1\wedge x\wedge \cdots \wedge x^{n-1})= x\wedge x^2\cdots \wedge x^{n-1}\wedge 1 = d$. Canceling $d$, one has $x=1$.
\end{proof}

\subsection{The center of the braid groups}

As we mentioned above, Garside proved in a simple way that the center of $B_n$, when $n>2$, is the cyclic subgroup generated by $\Delta^2$, where
$$
\Delta = \sigma_1 (\sigma_2\sigma_1)(\sigma_3\sigma_2\sigma_1)\cdots (\sigma_{n-1}\sigma_{n-2}\cdots \sigma_1).
$$
This was already shown by Chow~\cite{Chow}, in a short paper in which he also shows the correctness of the presentation (\ref{E:presentation}). But we will give Garside's proof for its simplicity. Notice that $B_2\simeq \mathbb Z$, so the center of $B_2$ is equal to $\langle \Delta \rangle =\langle \sigma_1\rangle$. For all other cases we have:

\begin{theorem}\cite{Garside}
If $n>2$, the center of $B_n$ equals $\langle \Delta^2\rangle$.
\end{theorem}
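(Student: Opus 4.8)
The plan is to use the Garside structure of $B_n^+$ together with an induction on word length. Since $\Delta^2$ is already known to be central, only the inclusion $Z(B_n)\subseteq\langle\Delta^2\rangle$ requires proof, and the first move is to reduce it to positive braids: given $z\in Z(B_n)$, write $z=\Delta^pA$ with $A\in B_n^+$ (the normal form discussed above); then for $N$ large $z\Delta^{2N}=\Delta^{p+2N}A\in B_n^+$, and it is still central because $\Delta^2$ is. So it is enough to prove that every $A\in B_n^+$ which commutes with all the $\sigma_i$ is a power of $\Delta^2$, and I would argue this by induction on $s(A)$, where $s\colon B_n\to\mathbb Z$ is the homomorphism $\sigma_i\mapsto 1$; the case $s(A)=0$ is trivial.

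The key tool is a divisibility (``propagation'') lemma: if $X\in B_n^+$ satisfies $\sigma_iX=X\sigma_{\pi(i)}$ for all $i$, where $\pi$ is either the identity or the involution $i\mapsto n-i$, and $X\ne 1$, then $\sigma_i\preccurlyeq X$ for \emph{every} $i$, and hence $\Delta=\sigma_1\vee\cdots\vee\sigma_{n-1}\preccurlyeq X$; symmetrically, under the same hypotheses $\sigma_i$ is a suffix of $X$ for every $i$, so $X\succcurlyeq\Delta$. Granting this, the inductive step goes as follows. Applying the lemma with $\pi=\mathrm{id}$ to $A$ (which satisfies $\sigma_iA=A\sigma_i$) gives $\Delta\preccurlyeq A$, say $A=\Delta A'$ with $A'\in B_n^+$. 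Using $\sigma_i\Delta=\Delta\sigma_{n-i}$ and the centrality of $A$ one checks that $\sigma_iA'=A'\sigma_{n-i}$ for all $i$ and that $A'\Delta^{-1}\in Z(B_n)$. If $A'=1$ then $A=\Delta$, which for $n>2$ is not central since $\sigma_1\Delta=\Delta\sigma_{n-1}\ne\Delta\sigma_1$; so $A'\ne 1$, and the suffix half of the propagation lemma applied to $A'$ gives $A'\succcurlyeq\Delta$, i.e.\ $A'=w\Delta$ with $w\in B_n^+$. Then $w=A'\Delta^{-1}=\Delta^{-2}A$ is positive \emph{and} central, and $s(w)=s(A)-s(\Delta^2)=s(A)-n(n-1)<s(A)$, so by induction $w=\Delta^{2m}$ and $A=\Delta^{2m+2}$. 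This closes the induction, and therefore $Z(B_n)=\{\Delta^{2k}:k\in\mathbb Z\}=\langle\Delta^2\rangle$.

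Finally, a word on the propagation lemma. The easy half is a dichotomy: for each $j$, the relation $\sigma_jX=X\sigma_{\pi(j)}$ exhibits $\sigma_jX$ as a common left-multiple of $\sigma_j$ and $X$, so $\sigma_j\vee X\preccurlyeq\sigma_jX$, and cancellativity in $B_n^+$ forces either $\sigma_j\preccurlyeq X$ or $\sigma_j\wedge X=1$. Since $X\ne 1$, some $\sigma_{j_0}$ is a prefix of $X$, and one then propagates this along the connected Coxeter diagram: if $\sigma_j\preccurlyeq X$ and $|i-j|=1$, write $X=\sigma_jY$, cancel to get $\sigma_jY=Y\sigma_{\pi(j)}$, and compare with $\sigma_iX=X\sigma_{\pi(i)}$; using the braid relation $\sigma_i\sigma_j\sigma_i=\sigma_j\sigma_i\sigma_j=\sigma_i\vee\sigma_j$ together with the fact that $X\sigma_{\pi(i)}=\sigma_jY\sigma_{\pi(i)}$ manifestly begins with $\sigma_j$, one extracts first $\sigma_i\preccurlyeq Y$ and then, after one further cancellation, $\sigma_i\preccurlyeq X$. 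Iterating over adjacent indices gives $\sigma_i\preccurlyeq X$ for all $i$; the suffix version is the mirror-image argument. I expect this propagation step — upgrading the clean dichotomy ``$\sigma_i\preccurlyeq X$ or $\sigma_i\wedge X=1$'' to ``$\sigma_i\preccurlyeq X$'' at adjacent indices — to be the main obstacle: it is elementary but needs careful bookkeeping with the braid relation and cancellativity, and it carries the real combinatorial content of the theorem. (The hypothesis $n>2$ itself is used only at the very end, to rule out $A=\Delta$; for $n=2$ one has $Z(B_2)=\langle\Delta\rangle$ instead.)
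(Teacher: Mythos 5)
Your proof is correct, and its engine is the same one Garside (and the paper) uses: if a nontrivial positive braid $X$ satisfies $\sigma_i X=X\sigma_{\pi(i)}$ for all $i$ (with $\pi$ the identity or the flip $i\mapsto n-i$), then some $\sigma_{j_0}$ is a prefix of $X$, and the least common multiple $\sigma_i\vee\sigma_j=\sigma_i\sigma_j\sigma_i$ together with left cancellation propagates prefix-divisibility along the connected Coxeter diagram, forcing $\Delta\preccurlyeq X$. The packaging differs, though. The paper writes a central element in the normal form $\Delta^pA$ with $A$ positive and $\Delta\not\preccurlyeq A$, so a single run of the propagation argument yields an immediate contradiction unless $A=1$; your two choices of $\pi$ correspond exactly to the paper's two parities of $p$, and $n>2$ enters only at the very end, to discard odd powers of $\Delta$. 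You instead drop the condition $\Delta\not\preccurlyeq A$ and run an induction on exponent sum, peeling off a full $\Delta^2$ at each step (prefix half for the first $\Delta$, suffix half for the second), with $n>2$ used inside the induction to rule out $A'=1$. Both work; the normal-form condition is what lets the paper replace your induction by a one-step contradiction. Two small points about your write-up, neither fatal: the dichotomy ``$\sigma_j\preccurlyeq X$ or $\sigma_j\wedge X=1$'' is automatic because $\sigma_j$ is an atom --- the real content there is only that a nontrivial positive word begins with some letter; and in the propagation step, the ``one further cancellation'' taking you from $\sigma_j\sigma_i\preccurlyeq X$ to $\sigma_i\preccurlyeq X$ must invoke the relation $\sigma_jX=X\sigma_{\pi(j)}$ a second time (write $X=\sigma_j\sigma_ic$, substitute into $\sigma_jX=X\sigma_{\pi(j)}$, and cancel the leading $\sigma_j$ to get $X=\sigma_i\,c\,\sigma_{\pi(j)}$); the factorization $X=\sigma_jY$ with $\sigma_i\preccurlyeq Y$ by itself does not give $\sigma_i\preccurlyeq X$.
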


\begin{proof}
Take an element of the center of $B_n$ written as $\Delta^p A$, where $A$ is positive and $\Delta\not\preccurlyeq A$. We first show that $A=1$.

Suppose that $A\neq 1$. Then $\sigma_i\preccurlyeq A$ for some $i$. If $p$ is even, $\Delta^p$ belongs to the center of $B_n$, hence so does $A$. Given $j$ such that $|i-j|=1$ one has $A\sigma_j\sigma_i = \sigma_j\sigma_i A$. As $\sigma_i$ and $\sigma_j$ are then both prefixes of $\sigma_j\sigma_i A$, their least common multiple must also be a prefix: $\sigma_j\sigma_i\sigma_j \preccurlyeq \sigma_j \sigma_i A$. Canceling $\sigma_j\sigma_i$ from the left,  one gets $\sigma_j\preccurlyeq A$. We can then iterate this process to obtain that $\sigma_1,\ldots,\sigma_{n-1}$ are all prefixes of $A$, hence $\Delta\preccurlyeq A$, a contradiction.

Suppose now that $p$ is odd. Then $(\Delta^p A)\sigma_{n-j}\sigma_{n-i} =\sigma_{n-j}\sigma_{n-i} (\Delta^p A) = \Delta^p \sigma_j\sigma_i A$. Hence $A\sigma_{n-j}\sigma_{n-i} =\sigma_{j}\sigma_{i}A$. If $|i-j|=1$, this implies as above that  $\sigma_j\sigma_i\sigma_j\preccurlyeq \sigma_j\sigma_i A$, hence $\sigma_j\preccurlyeq A$, and finally $\Delta\preccurlyeq A$, a contradiction.

Therefore, every element in the center of $B_n$ must be $\Delta^p$ for some $p$. Recall that $\Delta$ conjugates $\sigma_i$ to $\sigma_{n-i}$ for $i=1,\ldots,n-1$. This implies that odd powers of $\Delta$ belong to the center of $B_n$ if and only if $n=2$, hence this center consists of even powers of $\Delta$ when $n>2$.
\end{proof}

We remark that the same argument can be used to show that the center of an Artin-Tits group of spherical type is either $\langle\Delta\rangle$ or $\langle \Delta^2\rangle$, where $\Delta$ is the least common multiple of the standard generators~\cite{Brieskorn-Saito}.  In a general Garside group, the Garside element $\Delta$ is not necessarily the least common multiple of the standard generators (called atoms), so the same argument cannot be applied.

\subsection{Conjugacy problem}

The conjugacy decision problem in a group $G$ asks for an algorithm such that, given two elements $x,y\in G$, determines whether $x$ and $y$ are conjugate. The conjugacy search problem in $G$, on the other hand, asks for an algorithm such that, given two conjugate elements $x,y\in G$, finds a conjugating element. That is, finds $c\in G$ such that $c^{-1}xc=y$. Both problems are usually addressed to under the common name of {\it conjugacy problem}, as many algorithms solve both problems at the same time.

The conjugacy problem in braid groups was solved for the first time by Garside~\cite{Garside}, using what we now call the Garside structure of $B_n$. There have been several improvements of this algorithm, all of them using these techniques pioneered by Garside\cite{Elrifai-Morton,Birman-Ko-Lee1,Franco-GM1,Gebhardt,Birman-Gebhardt-GM1,Gebhardt-GM2, Gebhardt-GM3}. There is also a completely different solution: Charney~\cite{Charney} showed that braid groups (and Artin-Tits groups of spherical type) are biautomatic, and this yields an alternative solution to the conjugacy problem~\cite{Epstein}.

From all these solutions, the easiest to explain, and also the most efficient, is the one explained in detail in~\cite{Gebhardt-GM3} using the theoretical results from~\cite{Gebhardt-GM2}. For a short explanation of a simpler version of this algorithm, we recommend~\cite{Paris_2009}.

Basically, the algorithm goes as follows. There is a special kind of conjugation that can be applied to a braid, called {\it cyclic sliding}. Namely, if a braid $x$ is given in left normal form,
$$
    x=\Delta^p x_1\cdots x_r,
$$
then applying a cyclic sliding to $x$ consists of conjugating $x$ by its {\it preferred prefix} $\mathfrak p(x)=(\Delta^{p} x_1 \Delta^{-p})\wedge (x_r^{-1}\Delta)$. (If $r=0$ the preferred prefix is trivial.) We denote the resulting braid $\mathfrak s(x)$. This is a natural definition as it is the kind of operation that one usually uses to compute a left normal form, but applied to $x$ as if it was written as a cyclic word (`around a circle').

By iterated application of cyclic sliding, one eventually reaches a periodic orbit, that we call a {\it sliding circuit}. That is, if $\mathfrak s^{k}(x)=\mathfrak s^{t}(x)$ for some $k<t$, the sliding circuit of $x$ is $\{\mathfrak s^{k}(x),\mathfrak s^{k+1}(x),\ldots,\mathfrak s^{t-1}(x)\}$, a set of conjugates of $x$.

But in the conjugacy class of $x$ there can be, a priori, several sliding circuits. We then define $SC(x)$ as the {\it set of sliding circuits} in the conjugacy class of $x$. In other words, $SC(x)$ is the set of elements in periodic orbits for $\mathfrak s$ in the conjugacy class of $x$:
$$
  SC(x)=\{z\in B_n;\ \mathfrak s^{t}(z)=z \mbox{ for some $t>0$, and $z$ conjugate to $x$}\}.
$$
It is clear by definition that two braids $x$ and $y$ are conjugate if and only if $SC(x)=SC(y)$, which happens if and only if $SC(x)\cap SC(y)\neq \emptyset$.

The algorithm to solve the conjugacy problem in $B_n$ (and in every Garside group) described in~\cite{Gebhardt-GM3} does the following: Given $x,y\in B_n$, on one hand it computes $SC(x)$, and on the other hand it applies iterated cyclic sliding to $y$ until a repeated element $\tilde y$ is obtained. Then $x$ and $y$ are conjugate if and only if $\tilde y\in SC(x)$.

The computation of $SC(x)$ is based on the following results: First, $SC(x)$ is a finite set. Second, every two elements in $SC(x)$ are connected through a finite sequence of conjugations by {\it simple} elements, such that all the braids appearing along the way also belong to $SC(x)$. Third, the set of simple elements is finite. Therefore, one can compute the whole set $SC(x)$ by conjugating every known element by all simple elements, keeping those belonging to a sliding circuit, until no new element is obtained. Keeping track of the conjugations that connect the elements in $SC(x)$, this solves not only the conjugacy decision problem, but also the conjugacy search problem.

The solution just explained to compute $SC(x)$ is not efficient at all, of course, as the set of simple elements has $n!$ elements. Better ways to perform this task are explained in~\cite{Gebhardt-GM3}. We remark that this solution to the conjugacy problem holds for every Garside group (of finite type).

\subsection{Computations in braid groups}

For those interested in performing some computations in braids groups, Artin-Tits groups of spherical type, or other Garside groups, we can give some references.

Concerning braid groups, a web applet for computing left normal forms, solving the conjugacy problem, finding generators for the centralizer of a braid, and some other features can be found in~\cite{knotinfo}. The source code in C++ can be downloaded from~\cite{GMweb}.

The MAGMA computational algebra system also makes this kind of computations in braid groups~\cite{MAGMA}.

Finally, one can compute not only in braid groups, but in every Garside group using the CHEVIE package for GAP3~\cite{CHEVIE}.

\section{Braid groups are linear}

It is not possible to write a note on basic properties of braid groups without mentioning that braid groups are {\it linear}. This was a long-standing open question until it was solved for $B_4$ by Krammer~\cite{Krammer1}, and then for general $B_n$ using topological methods by Bigelow~\cite{Bigelow}. Shortly after that Krammer generalized his algebraic approach to all $B_n$~\cite{Krammer2}.

\begin{theorem}\cite{Krammer1,Bigelow,Krammer2}
There exists a faithful representation
$$
    \rho:\ B_n \rightarrow GL_N(k),
$$
where $k=\mathbb Q(p,q)$, and $N=n(n-1)/2$.
\end{theorem}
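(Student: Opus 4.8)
The plan is to construct the Lawrence--Krammer representation topologically, following Bigelow; this meshes naturally with the configuration-space and mapping-class-group points of view developed above. Write $k=\mathbb{Q}(q,t)$ with $q,t$ algebraically independent over $\mathbb{Q}$ (this is the field called $\mathbb{Q}(p,q)$ in the statement). Fix the $n$-punctured disc $\mathbb{D}_n$ and let $C$ be the configuration space of unordered pairs of distinct points of $\mathbb{D}_n$, a connected $4$-manifold. One first computes $H_1(C)\cong\mathbb{Z}^{n+1}$ and uses it to define a surjection $\phi\colon\pi_1(C)\twoheadrightarrow\mathbb{Z}^2=\langle q\rangle\oplus\langle t\rangle$, sending a loop to $(q^a,t^b)$ where $a$ is the total winding number of the two points around the $n$ punctures and $b$ records how the two points wind about one another. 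Let $\widetilde C\to C$ be the associated regular $\mathbb{Z}^2$-cover. Then a suitable second homology group $H$ of $\widetilde C$ (absolute, or relative to the preimage of a fixed basepoint arc on $\partial C$, depending on the convention one sets up) is a module over the Laurent ring $\Lambda=\mathbb{Z}[q^{\pm1},t^{\pm1}]$, and one writes down an explicit basis of $\binom{n}{2}$ \emph{fork classes} $v_{j,k}$ ($1\le j<k\le n$), each represented by a lift of a small surface spanned by an arc (a ``fork'') joining two of the punctures. Hence $H\otimes_{\Lambda}k$ is a $k$-vector space of dimension $N=n(n-1)/2$.

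Next I would define the action. A braid, regarded via $\mathcal M(\mathbb{D}_n)\simeq B_n$ as a homeomorphism $f$ of $\mathbb{D}_n$ fixing $\partial\mathbb{D}_n$ pointwise, induces a homeomorphism of $C$; since $f$ preserves both the total winding around the punctures and the winding of the pair about itself, it is compatible with $\phi$, and since it fixes $\partial\mathbb{D}_n$ it fixes the basepoint of $C$, so there is a canonical lift $\widetilde f$ of $f$ to $\widetilde C$ commuting with the deck action. Thus $\widetilde f$ acts $\Lambda$-linearly on $H$; this assignment is functorial in the braid, and base-changing to $k$ gives a homomorphism $\rho\colon B_n\to GL_N(k)$. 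Evaluating $\rho(\sigma_i)$ on the fork basis reproduces Krammer's explicit matrices and, in particular, re-proves directly that $\rho$ respects presentation~(\ref{E:presentation}).

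The substantial point is faithfulness, for which the key device is Bigelow's pairing between \emph{forks} and \emph{noodles}. A noodle $\nu$ is an embedded arc in $\mathbb{D}_n$ with both endpoints on $\partial\mathbb{D}_n$; it carries a class in a locally finite homology group of $\widetilde C$ dual to $H$, and there is a sesquilinear pairing $\langle\,\cdot\,,\,\cdot\,\rangle$ taking values in $\Lambda$. Suppose $\beta\in\ker\rho$. Then $\langle\beta F,\nu\rangle=\langle F,\nu\rangle$ for every fork $F$ and noodle $\nu$. The \emph{key lemma} evaluates $\langle F,\nu\rangle$ as a signed sum, over the geometric intersection points of the tine of $F$ with $\nu$, of monomials $q^at^b$ recording how the local picture is seen in the cover; the decisive combinatorial input is that the terms of extremal $(q,t)$-degree cannot cancel. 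It follows that $\langle F,\nu\rangle=\langle F',\nu\rangle$ for \emph{all} noodles $\nu$ forces the tines of $F$ and $F'$ to be isotopic in $\mathbb{D}_n$. Applying this with $F'=\beta\cdot F$ as $F$ runs over a family of forks whose tines cut $\mathbb{D}_n$ into discs, one sees that $\beta$ fixes a filling system of arcs up to isotopy; a standard Alexander-method argument (or the faithfulness of the Artin action $B_n\hookrightarrow Aut(F_n)$ recalled in the first section) then gives $\beta=1$ in $B_n$.

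The main obstacle is precisely this key lemma: one must identify the algebraically defined fork--noodle pairing with the monomial-weighted geometric intersection number and, more delicately, rule out cancellation among its extremal-degree terms — this no-cancellation phenomenon is exactly what lets one recover an isotopy class of an arc from purely homological data. Everything else (the computation of $H_1(C)$, the freeness and rank of $H$, the existence and canonicity of the lift $\widetilde f$, and the verification of the braid relations on the fork basis) is routine once the covering-space bookkeeping is in place. An entirely different route, better suited to the Garside material above, is Krammer's original one: define $\rho$ on the generators by explicit matrix formulas, verify presentation~(\ref{E:presentation}) by direct computation, and prove faithfulness first on the monoid $B_n^+$ — by showing that, for a monomial order on $\Lambda$, the leading coefficient of a suitable entry of $\rho(\beta)$ recovers the first factor of the left normal form of $\beta$, so that $\rho(\beta)$ determines $\beta$ by induction on canonical length — and then deducing faithfulness on $B_n$ from the fact that $B_n$ is the group of fractions of $B_n^+$ (if $\rho(ab^{-1})=1$ with $a,b\in B_n^+$, then $\rho(a)=\rho(b)$, hence $a=b$).
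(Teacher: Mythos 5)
The paper does not prove this theorem: it is stated with references to Krammer and Bigelow only, so there is no internal proof to compare yours against. Judged on its own terms, your text is an accurate and well-organized map of both published arguments --- the topological one via the Lawrence--Krammer module $H$ over $\Lambda=\mathbb{Z}[q^{\pm1},t^{\pm1}]$ with its fork basis and the noodle--fork pairing, and Krammer's algebraic one via the Garside normal form --- and you correctly locate where the difficulty sits. But as written it is a program, not a proof. In the Bigelow route everything rests on the key lemma, and you only describe what it should say: one must (i) actually establish the formula $\langle F,\nu\rangle=\sum_{z}\varepsilon_z\, q^{a_z}t^{b_z}$, the sum running over intersection points of the tine with the noodle after the pair is put in minimal position, which requires the covering-space bookkeeping that reads off $(a_z,b_z)$ and the sign $\varepsilon_z$; and (ii) prove non-cancellation among the terms of extremal degree, i.e.\ that two intersection points contributing the same extremal monomial必 contribute with the same sign. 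Neither step is routine, and (ii) is exactly the point where this representation parts company with the (unfaithful) Burau representation, whose analogous pairing does admit cancellation. Similarly, in the Krammer route the assertion that a leading coefficient of $\rho(\beta)$ recovers the first factor of the left normal form of a positive $\beta$ \emph{is} the theorem; the only part you actually carry out is the easy reduction from $B_n$ to $B_n^+$ via the group of fractions.

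Two smaller points. First, your hedge about whether $H$ is absolute or relative second homology of $\widetilde C$ conceals a genuine issue: one must prove that the fork classes span a free $\Lambda$-module of rank $n(n-1)/2$ on which both the braid action and the pairing are defined, and getting the homological setup right (and matching the topological representation with Krammer's matrices) took some care in the literature. Second, the concluding step ``a filling system of arcs is fixed up to isotopy, hence $\beta=1$'' is correct via the Alexander method, but it cannot simply be swapped for an appeal to faithfulness of the Artin action on $F_n$ without an intermediate argument converting ``fixes arcs up to isotopy'' into ``acts trivially on $\pi_1(\mathbb{D}_n)$''; keep the mapping-class formulation.
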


This result has been generalized to all Artin-Tits groups of spherical type~\cite{Cohen-Wales,Digne_2003,Paris_2002} (see also~\cite{Hee}).

Among the many consequences of this fact, it follows that Artin-Tits groups of spherical type are residually finite, and since they are finitely generated, they are also hopfian.

From these faithful representations of Artin-Tits groups, Marin deduced in a very short note~\cite{Marin} that pure Artin-Tits groups are residually torsion-free nilpotent. This implies, in particular, that pure Artin-Tits groups are bi-orderable (admit a total order which is invariant under left and right multiplication).

\section{Nielsen-Thurston classification}

We have seen that braids can be considered as automorphisms of the punctured disc and, modulo the center $\langle \Delta^2\rangle$ (modulo Dehn twists along the boundary), they can also be considered as automorphisms of the punctured sphere. In this setting, we can apply Nielsen-Thurston  theory~\cite{Thurston} to braids, in the following way: We can say that a braid in $B_n$ is periodic, reducible or pseudo-Anosov, if so is its projection onto $B_n/\langle \Delta^2\rangle$.

Let us briefly discuss about the above classification. Firstly, a braid is {\bf periodic} if, modulo the center of $B_n$, has finite order. That is, $\alpha\in B_n$ is periodic if and only if $\alpha^m=\Delta^p$ for some $m,p\in \mathbb Z$, $m\neq 0$. Recall from Section~\ref{SS:finite_order} that, after K\'er\'ekjart\'o and Eilenberg's result, this is equivalent to say that $\alpha$ is conjugate either to a power of $\delta=\sigma_1\cdots \sigma_{n-1}$ or to a power of $\epsilon=\sigma_1(\sigma_1\cdots \sigma_{n-1})$.

A braid $\alpha$ is {\bf pseudo-Anosov}~\cite{Thurston} if there exist two transverse measured foliations of the punctured disc, called the {\it unstable} ($\mathcal F_u$) and the {\it stable} ($\mathcal F_s$) foliations, respectively, such that $\alpha$ preserves $\mathcal F_u$ scaling the measure by a real number $\lambda >1$, and it also preserves $\mathcal F_s$, scaling the measure by $\lambda^{-1}$. These foliations admit some singular points, and are uniquely determined by $\alpha$ (up to so called Whitehead moves). The real number $\lambda$ is also determined by $\alpha$ and is called the {\it dilatation factor} of $\alpha$.

A braid $\alpha$ is said to be {\bf reducible} if it preserves a family of disjoint, non-degenerate, simple closed curves in the punctured disc $\mathbb D_n$.
Non-degenerate means that cannot be shrunk to a point (or a puncture), and that it is not isotopic to the boundary. In other words, that it encloses more than one  and less than $n$ punctures. For instance, the braid $\sigma_1\sigma_3\sigma_2\sigma_2$ preserves the family of curves depicted in Figure~\ref{Figure_10}. Notice that periodic braids can also be reducible: For instance $\Delta^2$ preserves every possible family of disjoint simple closed curves.

\begin{figure}[ht]
\centerline{\includegraphics{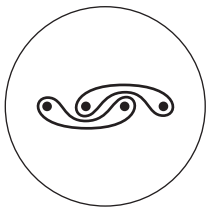}} \caption{This family of curves is preserved by the braid $\sigma_1\sigma_3\sigma_2\sigma_2$. }
\label{Figure_10}
\end{figure}

The famous theorem by Thurston of classification of surface homeomorphisms, applied to the case of braids, can be stated as follows:

\begin{theorem}\cite{Thurston}
Every braid belongs to exactly one of the following geometric types: (1) periodic, (2) pseudo-Anosov or (3) non-periodic and reducible.
\end{theorem}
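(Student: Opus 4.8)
The plan is to split the statement into two halves: the three types are mutually exclusive, and every braid realizes one of them. The exclusivity half is elementary and I would prove it in detail; the existence half is Thurston's theorem proper, and there I would give the structure of the argument and then refer to \cite{Thurston}.

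For exclusivity, note first that ``periodic'' and ``non-periodic and reducible'' are disjoint by definition, so it suffices to show that a pseudo-Anosov braid $\alpha$ is neither periodic nor reducible. It is not periodic: $\alpha$ scales the transverse measures of $\mathcal F_u$ and $\mathcal F_s$ by $\lambda>1$ and $\lambda^{-1}$, so $\alpha^m$ scales them by $\lambda^m\ne 1$ for $m\ne 0$; hence no power of $\alpha$ acts trivially on $\mathbb D_n$, and since $\Delta^2$ acts trivially on $\mathbb S_{n+1}$, no power of $\alpha$ equals any $\Delta^p$. It is not reducible: if $\alpha$ fixed the isotopy class of an essential simple closed curve $c$, then, using that the geometric intersection number is a homeomorphism invariant, $i(\alpha^m(d),c)=i(\alpha^m(d),\alpha^m(c))=i(d,c)$ would be constant in $m$ for every essential curve $d$; on the other hand, for $d$ not projectively equal to $\mathcal F_s$ the curves $\alpha^m(d)$ grow like $\lambda^m$ and converge projectively to $\mathcal F_u$, so $i(\alpha^m(d),c)$ behaves like $C\lambda^m\, i(\mathcal F_u,c)$, which tends to infinity because $\mathcal F_u$ fills $\mathbb D_n$ and hence meets every essential curve. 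This contradiction gives ``at most one type''.

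For existence, I would pass, exactly as in Section~\ref{SS:finite_order}, from $\alpha$ to the mapping class $[\alpha]$ of the $(n+1)$-punctured sphere, and run Thurston's classification of surface homeomorphisms. The cleanest route is the analytic one via Teichm\"uller space $\mathcal T=\mathcal T(\mathbb S_{n+1})$: $[\alpha]$ acts on $\mathcal T$ by isometries of the Teichm\"uller metric, and one studies the translation length $a=\inf_{X\in\mathcal T}d_{\mathcal T}(X,[\alpha]X)$. If $a=0$ and is attained, $[\alpha]$ fixes a complex structure, so is realized by a conformal automorphism of a closed Riemann surface with marked points, hence has finite order: $\alpha$ is periodic. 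If $a>0$ and is attained at some $X$, the Teichm\"uller geodesic through $X$ in the minimizing direction is $[\alpha]$-invariant, and the horizontal and vertical foliations of the associated quadratic differential are transverse, fill the surface, and are scaled by $e^{a}$ and $e^{-a}$: $\alpha$ is pseudo-Anosov with dilatation $e^{a}$. The remaining case is the one where the infimum is not attained, and there one must show $[\alpha]$ preserves an isotopy class of essential multicurve, i.e.\ that $\alpha$ is reducible. (Equivalently, one can use Thurston's original argument: $[\alpha]$ acts on the Thurston compactification $\overline{\mathcal T}$, a closed ball, so by Brouwer it has a fixed point, and one distinguishes whether this point lies in the interior, at a filling projective foliation whose transverse partner is also fixed, or at a non-filling/non-minimal foliation whose complementary subsurface or closed leaves yield an invariant reducing multicurve.)

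The hard part is precisely this last case: extracting an honestly $[\alpha]$-invariant reducing multicurve from a degenerating minimizing sequence (or from a boundary fixed point that is not uniquely ergodic and filling). This requires the structure theory of measured foliations and quadratic differentials and is the technical core of Thurston's theorem; I would not reprove it, but cite \cite{Thurston} (and, for a detailed account, the Bers analytic proof or the Fathi--Laudenbach--Po\'enaru seminar). For braids specifically there is an attractive self-contained alternative, more in the spirit of these notes, namely the algorithmic proof via train-track maps (Bestvina--Handel) or, in the punctured-disc model, via the curve-diagram/Dynnikov-coordinate calculus: iterating a natural ``relaxation'' operation on the image of a fixed diameter either stabilizes on a periodic pattern (periodic), reveals a reducing curve (reducible), or settles into an invariant train track with an expanding incidence matrix (pseudo-Anosov), simultaneously proving the trichotomy and deciding the type of a given braid word. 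This is the proof I would actually write out if space permitted.
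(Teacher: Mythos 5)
The paper does not actually prove this theorem: it states the trichotomy as a black box with a citation to \cite{Thurston}, so there is no internal proof to compare yours against. That said, your proposal is sound and goes somewhat further than the paper. The exclusivity half is correct and complete: the dilatation argument shows a pseudo-Anosov $\alpha$ cannot satisfy $\alpha^m=\Delta^p$ (since $[\alpha]^m$ would then be trivial on $\mathbb S_{n+1}$ while it scales the transverse measure of $\mathcal F_u$ by $\lambda^m\neq 1$), and the intersection-number argument ($i(\alpha^m(d),c)$ constant versus growth like $\lambda^m\, i(\mathcal F_u,c)>0$ for a filling foliation) correctly rules out an invariant essential curve. For the existence half you give the standard Bers/Teichm\"uller outline (translation length attained at $0$, attained and positive, or not attained) and you correctly identify the genuinely hard step --- extracting an invariant reducing multicurve in the degenerate case --- as the technical core that must be cited rather than reproved; this is exactly the part the paper also leaves to \cite{Thurston}. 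Two small points: in the $a=0$ case, finiteness of the order relies on the finiteness of the conformal automorphism group of a hyperbolic punctured sphere (fine here since $n+1\geq 4$ for $n\geq 3$, and the small cases are degenerate anyway); and your remark that every simple closed curve has positive intersection with an arational foliation is what makes the phrase ``for $d$ not projectively equal to $\mathcal F_s$'' automatic, so you could drop that caveat. The train-track/curve-diagram alternative you mention at the end would indeed be more in the spirit of these notes, but as written your proposal is a correct and appropriately honest treatment of a theorem the paper itself only quotes.
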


In the reducible case, the invariant family of curves may be used to decompose (or reduce) the homeomorphism of a surface, this is why the curves are called {\it reduction curves}. Basically, this means cutting the surface along the reduction curves, and looking how the homeomorphism acts on each of the resulting components.  In the particular case of braids, there is an advantage with respect to the general case: The decomposition of a punctured disc along simple closed curves yields punctured discs, so the decomposition of a braid along reduction curves yields simpler braids. See~\cite{GM6} for a precise definition of this decomposition, in the case of braids.

There is also a natural way to see this decomposition, when the braid is represented as a collection of strands. Suppose that $\alpha$ is a braid that preserves a family of curves $\mathcal C$, as in the left hand side of Figure~\ref{Figure_11}. Notice that if we conjugate $\alpha$ by a braid $\eta$, we obtain a braid $\beta=\eta^{-1}\alpha\eta$ which preserves the family of curves $\eta(\mathcal C)$. Notice also that we can choose $\eta$ so that $\eta(\mathcal C)$ is isotopic to family of circles (in the geometric sense: points at the same distance of a given point called centre), see Figure~\ref{Figure_11}. Therefore, up to conjugacy, we can suppose that $\alpha$ preserves a family of circles.

\begin{figure}[ht]
\centerline{\includegraphics{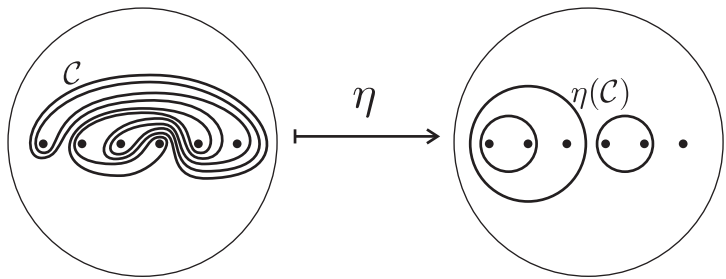}} \caption{If the family of curves $\mathcal C$ is preserved by a braid $\alpha$, the family $\eta(\mathcal C)$ is preserved by $\beta=\eta^{-1}\alpha\eta$. }
\label{Figure_11}
\end{figure}

If we now consider $\alpha$ as a motion of the $n$ base points, we can consider at the same time a motion {\it of the circles}, which go back to their original positions, possibly permuted. As the motion of the points describes strands, the motion of the circles describe {\it tubes}, that can be considered as fat strands. The outermost tubes, together with the strands which do not lay inside any tube, represent the {\it external braid}, while the braids inside the tubes represent the {\it interior braids}. These interior braids can be themselves reducible (we can have nested tubes), so they may also have an external braid and interior braids. Iterating this process, we decompose $\alpha$ along a family of curves, where each component is a braid with fewer strands.

As we saw above, the family of reduction curves of a given braid is not unique, so the above decomposition is not, a priori uniquely determined by $\alpha$. Moreover, this decomposition is defined up to conjugacy. Nevertheless, there is a well defined {\it canonical reduction system}~\cite{Birman-Lubotzky-McCarthy}, $CRS(\alpha)$, which makes the previous decomposition unique. It is precisely the set of {\it essential} curves for the braid $\alpha$, where we say that a curve $C$ is essential for $\alpha$ if:
\begin{enumerate}

 \item $\alpha^m (C)$ is isotopic to $C$ for some $m>0$.

 \item If $\alpha^m(C')$ is isotopic to $\mathcal C'$ for some $m>0$ and some simple
closed curve $C'$, then $C'$ can be isotoped to be disjoint to $C$.

\end{enumerate}

On the other hand, Lee and Lee~\cite{Lee-Lee} showed that there is a unique minimal positive braid sending any family of disjoint simple closed curves in $\mathbb D_n$ to a family of circles. Using this {\it minimal standardizer}, we can drop the expression {\it up to conjugacy} from the definition of the components of $\alpha$.

It is worth mentioning that $CRS(\alpha)\neq \emptyset$ if and only if $\alpha$ is reducible and non periodic, which is the third geometric type in Thurston's theorem.  By the above argument, we then have that the components of a reducible, non-periodic braid $\alpha$ are braids with fewer strands, which are uniquely determined by $\alpha$. Moreover, by Thurston~\cite{Thurston} (see also~\cite{Birman-Lubotzky-McCarthy}), all the components of $\alpha$ with respect to $CRS(\alpha)$ are either periodic or pseudo-anosov.

This geometric decomposition of braids is very useful for showing theoretical and also practical results. From the practical point of view, there is a project explained in~\cite{Birman-Gebhardt-GM1}, to try to solve the conjugacy problem in $B_n$ in polynomial time, which starts by distinguishing the periodic, pseudo-Anosov and reducible (non-periodic) case. In the periodic case, there is a polynomial solution given in~\cite{Birman-gebhardt-GM3}. Concerning theoretical results, we will end this note with a couple of examples.

\subsection{Centralizer of a braid}

The structure of the centralizer of a braid $\alpha$ depends heavily on its geometric type. In particular, the reduction curves of a braid $\alpha$ induce a semidirect product decomposition of its centralizer $Z(\alpha)$.

If $\alpha\in B_n$ is {\bf periodic}, we recall that it is conjugate to a rotation, that is, to a power of $\delta$ or $\epsilon$. The centralizer of these braids, determined in~\cite{Bessis-Digne-Michel} are precisely the braids which are invariant under that rotation. Algebraically, $Z(\alpha)$ is the braid group of an annulus, that is, the fundamental group of the configuration space of several points in an annulus (see~\cite{GM-Wiest}).

If $\alpha\in B_n$ is {\bf pseudo-Anosov}, one can use the well known result by McCarthy~\cite{McCarthy} applied to braids~\cite{GM-Wiest}, to see that $Z(\alpha)$ is isomorphic to $\mathbb Z\times \mathbb Z$. One generator is pseudo-Anosov, usually a root of $\alpha$ (but not always); the other generator is periodic, and can be chosen to be a root of $\Delta^2$.

Finally, if $\alpha\in B_n$ is reducible, its external braid is $\alpha_{ext}$, and its interior braids are $\alpha_1,\ldots,\alpha_t$, then in~\cite{GM-Wiest} it is shown that
$$
  Z(\alpha)\simeq \left(Z(\alpha_1)\times \cdots \times Z(\alpha_t)\right)\rtimes Z_0(\alpha_{ext}),
$$
where $Z_0(\alpha_{ext})$ is a subgroup of $Z(\alpha_{ext})$ depending only on the permutation induced by $\alpha_{ext}$. We then see that every element in the centralizer of $\alpha$ is {\it composed} by an element in the centralizer of $\alpha_{ext}$ and elements in the centralizers of $\alpha_1,\ldots,\alpha_t$. Notice that some $\alpha_i$ may be reducible and non-periodic, so $Z(\alpha_{i})$ may also be decomposed as a semidirect product as above.

This shows how the geometric classification of a braid determines some algebraic properties like the structure of its centralizer.

\subsection{Roots of a braid}

We end this note with a result that shows how the geometric techniques mentioned in this Section allow to show some theoretical results. We will care about the problem of unicity of roots in the braid group.

Let us first define a length in the braid group $B_n$. As the relations in Presentation~(\ref{E:presentation}) are homogeneous, given a braid $\alpha\in B_n$ we can define $s(\alpha)\in \mathbb Z$ to be the sum of the exponents in any writing of $\alpha$ as a word in the generators and their inverses.

Concerning unicity of roots in $B_n$, we have the following result in the pseudo-Anosov case.

\begin{proposition}\cite{GM4}
For $k\neq 0$, the $k$th root of a Pseudo-Anosov braid, if it exists, it is unique.
\end{proposition}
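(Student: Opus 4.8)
The plan is to exploit the fact, recalled in the previous subsection, that the centralizer of a pseudo-Anosov braid is abelian (indeed isomorphic to $\mathbb Z\times\mathbb Z$). Suppose $\alpha\in B_n$ is pseudo-Anosov and that $\beta_1,\beta_2\in B_n$ satisfy $\beta_1^k=\beta_2^k=\alpha$ with $k\neq 0$. The first, trivial but crucial, observation is that each $\beta_i$ commutes with $\alpha$, simply because it commutes with its own power $\beta_i^k=\alpha$. Hence $\beta_1,\beta_2\in Z(\alpha)$, and the whole point will be that this centralizer is so small that it cannot contain two distinct $k$th roots of the same element.

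Concretely, I would invoke the structure of the centralizer: by McCarthy's theorem applied to braids (the pseudo-Anosov case in the subsection on centralizers), $Z(\alpha)\simeq\mathbb Z\times\mathbb Z$, so in particular $Z(\alpha)$ is abelian and torsion-free. Set $\gamma=\beta_1\beta_2^{-1}\in Z(\alpha)$. Since $Z(\alpha)$ is abelian, $\gamma^k=\beta_1^k\beta_2^{-k}=\alpha\alpha^{-1}=1$; and since $Z(\alpha)$ is torsion-free and $k\neq 0$, this forces $\gamma=1$, that is $\beta_1=\beta_2$. (One could equally well finish by quoting that $B_n$ itself is torsion-free, a fact already proved several times in this note.) Thus the only nontrivial ingredient is the commutativity, or torsion-freeness, of $Z(\alpha)$, and the real obstacle is hidden entirely in that citation.

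If one prefers a self-contained route, I would argue directly with the Nielsen--Thurston data. Since $\beta_i$ commutes with $\alpha$, it maps the canonical invariant objects of $\alpha$ to invariant objects of $\alpha$; by the uniqueness (up to Whitehead moves) of the stable and unstable foliations $\mathcal F_s,\mathcal F_u$ of $\alpha$, the braid $\beta_i$ must preserve both of them, multiplying the transverse measures by some real $\mu_i>0$ with $\mu_i^{\,k}=\lambda$, where $\lambda$ is the dilatation factor of $\alpha$; hence $\mu_1=\mu_2=\lambda^{1/k}$. Consequently $\beta_1\beta_2^{-1}$ preserves $\mathcal F_s$ and $\mathcal F_u$ without scaling the measures, which forces it to be of finite order, hence trivial because $B_n$ is torsion-free, and again $\beta_1=\beta_2$. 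The delicate point of this alternative is exactly to make the implication \emph{``preserves both foliations without scaling $\Rightarrow$ finite order''} precise, together with the bookkeeping of the Whitehead ambiguity and of the sign of $k$; but this is precisely the content of McCarthy's result, so this route merely unwinds the citation used in the short proof above.
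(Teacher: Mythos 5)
Your first route is correct and genuinely different from the paper's. The paper does not invoke the structure theorem $Z(\alpha)\simeq\mathbb Z\times\mathbb Z$; instead it proves directly that the two roots commute, by observing that their commutator preserves both invariant foliations while scaling the measures by $1$, hence is \emph{periodic} (Ivanov), and then killing it with the exponent-sum homomorphism $s$: every periodic braid is conjugate to a power of $\delta$ or $\epsilon$, whose exponent sums are nonzero multiples of $n-1$ or $n$, so the only periodic braid with $s=0$ is trivial. After that the paper finishes exactly as you do, via torsion-freeness. Your argument trades this self-contained commutativity step for the citation of McCarthy's theorem, which is a strictly stronger input (it gives the whole centralizer, not just that two specific elements of it commute); what you buy is a three-line proof, and what you lose is that the heavy lifting is entirely outsourced. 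One caution about your second, ``self-contained'' route: an element preserving both foliations without scaling is periodic in the Nielsen--Thurston sense, i.e.\ of finite order only \emph{modulo the center} $\langle\Delta^2\rangle$ --- for instance $\Delta^2$ itself is periodic but of infinite order --- so you cannot conclude triviality from torsion-freeness of $B_n$ alone. The missing step is precisely the exponent-sum argument above (noting that $k\,s(\beta_1)=s(\alpha)=k\,s(\beta_2)$ forces $s(\beta_1\beta_2^{-1})=0$), which is how the paper closes this gap; you flag the difficulty but attribute its resolution to McCarthy, whereas the paper resolves it by this elementary computation.
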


\begin{proof}
Let $\alpha$ and $\beta$ be two $k$-th roots of a pseudo-Anosov braid $\gamma$. That is $\alpha^k=\gamma=\beta^k$. The geometric type of a braid is preserved by taking powers, hence $\alpha$ and $\beta$ are also pseudo-Anosov. Moreover, the stable and unstable foliations are also preserved by taking powers, so $\alpha$, $\beta$ and $\gamma$ preserve the same pair of measured foliations. If $\lambda$ is the dilatation factor of $\alpha$, it is clear that $\lambda^{|k|}$ is the dilatation factor of $\gamma$, and then $\lambda$ is the dilatation factor of $\beta$.

Hence $\alpha$ and $\beta$ are pseudo-Anosov braids preserving the same pair of foliations and having the same dilatation factor. A priori, this does not necessarily imply that $\alpha=\beta$, but we will see that in $B_n$ this is the case.

First we show that $\alpha$ and $\beta$ commute. Consider $\rho=\alpha\beta\alpha^{-1}\beta^{-1}$. This braid preserves the same pair of invariant foliations, scaling their measures by 1. Therefore $\rho$ is a periodic element~\cite{ivanov}.  As periodic elements are conjugates of powers of $\delta$ and $\epsilon$, where $s(\alpha)=n-1$ and $s(\beta)=n$, the only periodic element with zero length is the trivial element, hence $\rho=1$, so $\alpha\beta=\beta\alpha$.

Therefore, from $\alpha^k=\beta^k$ we get $\alpha^k\beta^{-k}=1$. As $\alpha$ and $\beta$ commute, it follows that $(\alpha\beta^{-1})^k=1$, and since $B_n$ is torsion free, we finally obtain $\alpha\beta^{-1}=1$ and then $\alpha=\beta$.
\end{proof}

In the periodic case we do not have unicity of roots. For instance $(\sigma_1\sigma_2)^3=(\sigma_2\sigma_1)^3\in B_4$. But a conjecture by Makanin is satisfied: every two roots are conjugate.

\begin{proposition}\cite{GM4}
For $k\neq 0$, all $k$th roots of a periodic braid are conjugate.
\end{proposition}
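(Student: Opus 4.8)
The plan is to reduce everything to the classification of periodic braids recalled in Section~\ref{SS:finite_order} and in the Nielsen--Thurston section: every periodic braid is conjugate to a power of $\delta=\sigma_1\cdots\sigma_{n-1}$ or of $\epsilon=\sigma_1(\sigma_1\cdots\sigma_{n-1})$. I would combine this with three facts already established in the excerpt: $\Delta^2=\delta^n=\epsilon^{n-1}$ is central; the exponent sum $s\colon B_n\to\mathbb Z$ is a homomorphism (hence a conjugacy invariant) with $s(\delta^m)=m(n-1)$ and $s(\epsilon^m)=mn$, so in particular $\delta$, $\epsilon$ and $\Delta^2$ have infinite order; and $B_n$ is torsion-free. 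For $n\leq 2$ the group $B_n$ is cyclic and $k$-th roots are unique (hence trivially conjugate), so I assume $n\geq 3$. Given $\alpha^k=\gamma=\beta^k$ with $\gamma$ periodic, I first note that $\alpha$ and $\beta$ are periodic as well: if $\gamma^q=\Delta^p$ with $q\neq 0$, then $\alpha^{kq}=\gamma^q=\Delta^p$ with $kq\neq 0$, and likewise for $\beta$. So each of $\alpha,\beta$ is conjugate to some $\delta^a$ or $\epsilon^b$, and raising to the $k$-th power, $\gamma$ is conjugate to the corresponding $\delta^{ak}$ or $\epsilon^{bk}$.

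Case 1: $\gamma$ is not central. Here I would first rule out a ``type clash''. If $\alpha\sim\delta^a$ and $\beta\sim\epsilon^b$, then $\delta^{ak}\sim\gamma\sim\epsilon^{bk}$, so $ak(n-1)=bkn$ by the exponent sum; since $\gcd(n-1,n)=1$ and $k\neq 0$, this forces $n\mid a$, whence $\delta^{ak}=(\delta^n)^{a k/n}$ is a power of $\Delta^2$ and $\gamma$ is central, a contradiction. So $\alpha$ and $\beta$ are both $\delta$-type or both $\epsilon$-type. In the $\delta$-type subcase, $\delta^{ak}\sim\gamma\sim\delta^{a'k}$ gives $ak(n-1)=a'k(n-1)$, hence $a=a'$ and $\alpha\sim\delta^{a}=\delta^{a'}\sim\beta$; the $\epsilon$-type subcase is identical.

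Case 2: $\gamma=\Delta^{2c}$ is central. If $c=0$, then $\alpha^k=\beta^k=1$, so $\alpha=\beta=1$ by torsion-freeness. If $c\neq 0$ and $\alpha\sim\delta^{a}$, then since a conjugate of a central element equals it, $\delta^{ak}=\Delta^{2c}=\delta^{nc}$, so $ak=nc$; likewise $\alpha\sim\epsilon^{b}$ forces $bk=(n-1)c$, and the same holds for $\beta$. If $\alpha$ and $\beta$ have the same type, the corresponding exponents coincide ($a=a'=nc/k$, or $b=b'=(n-1)c/k$) and $\alpha\sim\beta$. If they have different types, then $k\mid nc$ and $k\mid(n-1)c$, hence $k\mid c$, and then $\delta^{a}=(\delta^{n})^{c/k}=\Delta^{2c/k}=(\epsilon^{n-1})^{c/k}=\epsilon^{b}$ is central, so $\alpha=\beta=\Delta^{2c/k}$. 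In every case $\alpha$ and $\beta$ are conjugate.

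The step I expect to be the real obstacle is the central case, and within it precisely the possibility of a $\delta$-type root and an $\epsilon$-type root of the same central braid: one must verify that this can only happen when both roots are themselves central (hence equal). The divisibility bookkeeping above settles it, but this is the only place where the two periodic families genuinely interact, and it is easy to overlook. Everything else is a direct consequence of the classification of periodic braids, the centrality of $\Delta^2$, and the exponent-sum invariant.
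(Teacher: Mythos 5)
Your proof is correct and follows essentially the same route as the paper's: classify the periodic roots as conjugates of powers of $\delta$ or $\epsilon$, use the exponent sum $s$ to pin down the exponents and rule out a type clash via $\gcd(n-1,n)=1$, and observe that the only overlap $\delta^{nl}=\epsilon^{(n-1)l}=\Delta^{2l}$ is central. The paper organizes the cases by the arithmetic of $r=s(\alpha)$ rather than by whether $\gamma$ is central, but the ingredients and the logic are the same (your central-case divisibility bookkeeping just makes explicit what the paper leaves implicit).
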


\begin{proof}
If $\alpha$ and $\beta$ are $k$th roots of the same periodic braid $\gamma$, then $\alpha$ and $\beta$ are also periodic, and $s(\alpha)=s(\beta)$. Let $r=s(\alpha)=s(\beta)$. Recall that $\alpha$ and $\beta$ must be conjugate to a power of either $\delta$ or $\epsilon$. The length (exponent sum) of any conjugate of $\delta^m$ is $m(n-1)$. And the length of any conjugate of $\epsilon^t$ is $tn$.  Hence, if $r=m(n-1)$ and $m$ is not a multiple of $n$, both $\alpha$ and $\beta$ are conjugate to $\delta^m$. If $r=tn$ and $t$ is not a multiple of $n-1$, both $\alpha$ and $\beta$ are conjugate of $\epsilon^t$. It remains the case in which $r=ln(n-1)$, then $\alpha$ and $\beta$ could be conjugate to either $\delta^{nl}$ or to $\epsilon^{(n-1)l}$. As $\delta^n=\epsilon^{n-1}=\Delta^2$, these two braids above are precisely equal to $(\Delta^2)^l$, and also in this case $\alpha$ and $\beta$ are conjugate.
\end{proof}

From the pseudo-Anosov  and the periodic case, one can deduce that Makanin's conjecture holds for every braid, by decomposing the reducible braids and applying the above results to each of its components. See~\cite{GM4} for details of the reducible case. The result is then the following:

\begin{theorem}\cite{GM4}
For $k\neq 0$, all $k$th roots of a braid are conjugate.
\end{theorem}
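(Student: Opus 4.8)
The plan is to reduce the general statement to the periodic and pseudo-Anosov cases already established, by means of the Nielsen-Thurston decomposition, arguing by induction on the number of strands $n$. Let $\alpha,\beta\in B_n$ be such that $\alpha^k=\gamma=\beta^k$. Since the geometric type of a braid is preserved under taking nonzero powers, $\alpha$ and $\beta$ have the same geometric type as $\gamma$; if that common type is periodic or pseudo-Anosov, the result follows at once from the two previous propositions. The cases $n\leq 2$ are trivial, as $B_1$ is trivial and $B_2\cong\mathbb Z$. So from now on I would assume $\gamma$ (hence also $\alpha$ and $\beta$) is reducible and non-periodic, and that the theorem holds for all braid groups on fewer strands.

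The first step is to note that the canonical reduction system is invariant under nonzero powers, so that $CRS(\alpha)=CRS(\gamma)=CRS(\beta)$; call this common family of curves $\mathcal C$. Conjugating $\alpha$, $\beta$ and $\gamma$ simultaneously by a single braid — an operation that affects neither the hypothesis nor the conclusion — I would arrange $\mathcal C$ to be a standard family of round circles (for instance using the minimal standardizer of~\cite{Lee-Lee}). Then all three braids preserve $\mathcal C$ and decompose into an external braid acting on the resulting tubes and a collection of interior braids, and this decomposition is compatible with powers: $\gamma_{ext}=(\alpha_{ext})^k=(\beta_{ext})^k$, while each interior braid of $\gamma$ is a suitable power of a conjugate of an interior braid of $\alpha$, and likewise of $\beta$.

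Now $\alpha_{ext}$ and $\beta_{ext}$ are $k$th roots of $\gamma_{ext}$ in $B_t$ with $t<n$, so the induction hypothesis provides a braid of $B_t$ conjugating one to the other; thickening it to a tube braid of $B_n$, I would reduce to the situation in which $\alpha$ and $\beta$ induce the same permutation of the tubes. What remains is to compare the interior braids, one cycle of this permutation at a time. For each cycle, the associated interior braid of $\gamma$ equals, up to conjugacy, a power $\alpha_i^{m}$ of an interior braid of $\alpha$ with $m$ coprime to the relevant cycle length, and similarly a power of an interior braid of $\beta$; since these interior braids live in braid groups on fewer than $n$ strands, the induction hypothesis together with the torsion-freeness of $B_n$ — used exactly as in the periodic and pseudo-Anosov arguments above — shows that the interior braids of $\alpha$ and $\beta$ can be conjugated to each other. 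Gluing the external conjugation and these interior conjugations produces a single braid of $B_n$ conjugating $\alpha$ to $\beta$.

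I expect the main obstacle to lie in this last gluing step and the combinatorics behind it. A cycle of length $\ell$ in the tube permutation of $\alpha$ splits into $\gcd(k,\ell)$ cycles in that of $\gamma$, so the interior braid of $\gamma$ attached to such a cycle records only a power and a conjugate of an interior braid of $\alpha$, and one must choose the interior conjugators consistently all the way around each cycle so that they assemble into a genuine conjugation of the ambient braid rather than of the pieces taken separately. Once this bookkeeping is set up carefully the induction closes; the full treatment of the reducible case is given in~\cite{GM4}.
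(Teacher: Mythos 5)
Your proposal follows essentially the same route as the paper: the paper's own treatment of this theorem is a one-sentence reduction to the periodic and pseudo-Anosov propositions via the canonical reduction system of the reducible case, deferring all details to~\cite{GM4}, exactly as you do. Your sketch is in fact somewhat more detailed than the paper's, and correctly isolates the real technical point, namely choosing the interior conjugators consistently around each cycle of the tube permutation so that they assemble with the external conjugation into a single conjugating braid.
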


This gives an idea on how these geometric tools can be used to show algebraic results in $B_n$.

\section{Acknowledgements}

I want to thank a number of people who made very interesting and useful comments and suggestions during the course at Pau: Vincent Florens, Ivan Marin, Luis Paris, Joan Porti, Michael Heusener, Arkadius Kalka, Andr\'es Navas, and those I am forgetting right now. Thanks also to the organizers of the School {\it Tresses in Pau}, for organizing such an interesting and pleasant event, and for {\it forcing} me to prepare this course, which I enjoyed so much. Thanks to the anonymous referee for a careful reading and many useful suggestions.


\bibliographystyle{plain}

\bibliography{biblio}

\end{document}